\newtheorem{theorem}{Theorem}
\newtheorem{corollary}{Corollary}
\newtheorem{remark}{Remark}
\newtheorem{lemma}[theorem]{Lemma}
\newcommand{\conj}[1]{\overline{#1}}
\newcommand{\br}[1]{\left({#1}\right)}
\newcommand{\Com}{\mathbb{C}}
\newcommand{\xdiff}{\tilde{x}}
\newcommand{\limy}{\ell_x}
\newcommand{\limu}{\ell_u}
\newcommand{\limq}{\ell_q}
\newcommand{\udiff}{\tilde{u}}
\newcommand{\Acal}{\mathcal{A}}
\begin{document}

\title{Constructing Convex Inner Approximations of Steady-State Security Regions}
\author{Hung D. Nguyen, Krishnamurthy Dvijotham, and~Konstantin~Turitsyn

\thanks{HN is with School of EEE, Nanyang Technological University, Singapore email:hunghtd@ntu.edu.sg; KD is with Google Deepmind, London, UK. e-mail:dvij@cs.washington.edu; KT is with the Department of Mechanical Engineering, MIT, Cambridge, MA, USA e-mail: turitsyn@mit.edu}}

\markboth{IEEE Transactions on Power Systems,~Vol.~, No.~, September~2018}%
 {Shell \MakeLowercase{\textit{et al.}}}%

\maketitle

\begin{abstract}
We propose a scalable optimization framework for estimating convex inner approximations of the steady-state security sets. The framework is based on Brouwer fixed point theorem applied to a fixed-point form of the power flow equations. It establishes a certificate for the self-mapping of a polytope region constructed around a given feasible operating point. This certificate is based on the explicit bounds on the nonlinear terms that hold within the self-mapped polytope. The shape of the polytope is adapted to find the largest approximation of the steady-state security region. While the corresponding optimization problem is nonlinear and non-convex, every feasible solution found by local search defines a valid inner approximation. The number of variables scales linearly with the system size, and the general framework can naturally be applied to other nonlinear equations with affine dependence on inputs. Test cases, with the system sizes up to $1354$ buses, are used to illustrate the scalability of the approach. The results show that the approximated regions are not unreasonably conservative and that they cover substantial fractions of the true steady-state security regions for most medium-sized test cases.
\end{abstract}

\begin{IEEEkeywords}
Feasibility, OPF, inner approximation, solvability, nonconvexity.
\end{IEEEkeywords}

\section{Introduction} \label{sec:intro}
The AC optimal power flow (ACOPF) representation of a power system forms a foundation for most of the normal and emergency decisions in power systems. The traditional ACOPF formulation targets the problem of finding the most economical generation dispatch admitting a voltage profile that satisfies operational constraints. Independent System Operators solve the ACOPF in different contexts on multiple time intervals ranging from a year for planning purposes to 5 minutes for real-time market clearing. Since it was first introduced in 1962, the OPF problem has been one of the most active research areas in the power system community. Being an NP-hard problem, it still lacks a scalable and reliable optimization algorithm \cite{cain2012history}, although last years were marked by tremendous progress in this area \cite{jabr2006radial, LavaeiLowzerogap, coffrin2016qc, cui2017new}.
% farivar2013branch

Active adoption of renewable generation exposes the power grids to unprecedented levels of uncertainty. Unexpected variations of wind may push the operating point found by the OPF into technically unacceptable regions, or, in more dramatic scenarios, outside of the loadability regions where the power flow solution exists at all. Traditionally, the ability of the system to survive these events can be characterized by constructing the so-called steady-state security sets in power injection space. In this work we focus on the construction of convex inner approximations of the sets, that can naturally be used to assess the robustness of a given operating point to uncertainties in renewable or load power fluctuations. They can also be used to enforce feasibility in situations where constraining the system to nonlinear power flow equations and operational specifications is not practical. This may occur, for example, in the context of real-time corrective emergency control where the computation time is critical or in high-level optimization that is too complex to incorporate nonlinear constraints. Similarly, the reduced size inner approximations can be enforced in situations where instead of the regular feasibility, an additional margin is required from the solutions concerning load/renewable uncertainty. Whenever the approximation has a special geometric structure, it can also be used for decentralized decision making, for example, by allowing for independent and communication-free redispatch of power resources in different buses/areas. %Finally, we expect, that the construction developed in this paper can be used to develop reduced nonlinear models of the subsystems. 

Like many other power system problems, the original work on inner approximations of power flow steady-state security sets was introduced by Schweppe and collaborators in the late 70s \cite{Schweppe1975}. The first practical algorithms based on fixed point iteration appeared in the early 80s \cite{wu1982steady}. In the Soviet Union, the parallel effort focused on the problem of constructing solvability sets for static swing equations \cite{Vasin1985, Byc1989}. More recently, new algorithms based on different fixed point iterations have been proposed for radial distribution grids without PV buses \cite{bolognani2016existence, EssieHungKostya, wang2016existence, nguyen2017framework}, decoupled power flow models equivalent to resistive networks with constant power flows \cite{simpson2016voltage} and lossless power systems \bstctlcite{IEEEexample:BSTcontrol} \cite{simpson2017part1, simpson2017part2}. Although more general approaches that do not rely on special modeling assumptions have been proposed in the literature \cite{DjTuritsyn2015Feasibility, Dvijotham2018LCSS, ChiangOPF}, they still suffer from either poor scalability or high conservativeness or both. It is also worth noting that this work is a natural evolution of the ideas originally presented in \cite{Dvijotham2018LCSS}; the approaches presented here and in \cite{Dvijotham2018LCSS} are in some sense complementary to each other. The techniques developed in \cite{Dvijotham2018LCSS} are applicable to general quadratic systems and come with explicit estimates of the conservativeness. However, they require norm-based estimates of Lipschitz constants, which tend to be overly conservative for large-scale systems due to the high conditioned numbers involved. On the other hand, this work does not rely on Lipschitz constant estimation but proposes a natural adaptation of the self-mapping region to the geometry of the problem which helps remove the conservativeness introduced in \cite{Dvijotham2018LCSS}.

In this work, we develop sufficient conditions for the existence of feasible power flow solutions. Based on these conditions, we propose a novel algorithmic approach to constructing inner approximations of steady-state security domains that can apply to the most general formulation of power flow equations without any restrictions on the network and bus types. The size of the resulting regions is comparable to the actual feasibility domain even for large-scale models and can be potentially improved in the future. In comparison to all related approaches above, the inner approximations developed in this work are more flexible and adaptive as one can customize their construction for a number of specific preferences or tasks, by solving auxiliary optimization problems like those defined in \eqref{eq:optimalextension}. These optimization problems, although non-convex, allow for fast construction of the inner approximations. Any feasible solution to these problems, even a suboptimal one, establishes a region where the solution of the original power flow is guaranteed to exist and satisfy feasibility constraints. Performance of the proposed algorithm is validated on several medium- and large-sized IEEE test cases.

\section{Notation and an OPF formulation} \label{sec:OPFform}
We start this section by introducing the set of notation which will be used in this paper. We use $\Com$ to denote the set of complex number. $\conj{\pmb{x}}$ denotes the conjugate of $\pmb{x}\in\mathbb{C}^n$. The considered system is characterized by a graph with vertex set $\mathcal{V}$. There are three kinds of vertices in the system: load and generator sets denoted by $\mathcal{L}$ and $\mathcal{G}$, respectively, and the slack bus nodes denoted as $\mathcal{S}$. Further, $\mathcal{E}$ is a set of directed edges $ e = (k,l)$, $k, l \in \mathcal{V}$. The $\odot$ operator stands for element-wise product $[x\odot y]_i=x_iy_i$ for $x,y \in \mathbb{R}^n$.
In this work, we consider a traditional ACOPF-like formulation defined by the following equations and inequalities:
\begin{align} \label{eq:pf}
p_k + j q_k 
&= \sum_l V_k \overline{Y}_{kl} V_l \exp(-j\theta_e) , \quad k \in (\mathcal{L}, \mathcal{G})\\
V_k^{\min} &\leq V_k \leq V_k^{\max}, \quad k \in \mathcal{L} \label{eq:v} \\
\theta_{e}^{\min} &\leq \theta_{e} \leq \theta_{e}^{\max}, \quad e \in \mathcal{E} \\
p_{k}^{\min} &\leq p_{k} \leq p_{k}^{\max}, \quad k \in \mathcal{G}  \label{eq:pgen}\\
q_{k}^{\min} &\leq q_{k} \leq q_{k}^{\max}, \quad k \in \mathcal{G} \label{eq:qgen}\\
|s_e^f| &\leq s_e^{ max} , \quad |s_e^t| \leq s_e^{\max}, \quad e \in \mathcal{E}. \label{eq:imax}
\end{align}
Each bus $k\in \mathcal{V}$ of the system is characterized by a complex voltage $v_k = V_k \exp(j\theta_k)$ and apparent power injection $s_k = p_k + j q_k$. For each edge or line $e$ connecting bus $k$ and bus $l$, we define an angle difference $\theta_e = \theta_k - \theta_l$. The apparent power transfer $s_e$ through the line $e$ can be defined either at the beginning (on the ``from'' bus) or at the end (on the ``to'' bus) of the line, with the corresponding values denoted by the superscripts $f$ and $t$.

In the following derivations of the general results, we use $u$ to denote the vector of ``parameters'' $u = [p_\mathcal{G};p_\mathcal{L}; q_\mathcal{L}; V_\mathcal{G}]$ and use $x$ to denote the space of variables, i.e. $x =[\theta_\mathcal{G};\theta_\mathcal{L};V_\mathcal{L}; q_\mathcal{G}]$. This distinction reflects the typical situation in power systems, where the components of $u$ can be either controlled directly by the system operators, or change in response to external factors, while the components of $x$ are usually not directly controllable and are determined by the physical power flow equations. Generally, the choice of parameter and variable vectors depends on the context and may be different in some other applications.

In this work, we focus on the characterization of the steady-state security set. This is the set of control inputs $u$ for which the power flow equations \eqref{eq:pf} has at least one feasible solution which satisfies all the operational constraints \eqref{eq:v} to \eqref{eq:imax}.

The steady-state security set is generally non-convex and may not even be single connected. In this work, we construct convex inner approximations of polytopic shape for this set. Mathematically, they represent sufficient conditions for the existence of feasible solution that can be quickly checked or enforced in optimization procedures where the optimality of the power flow solution is not critical. For example, they can be used for online identification of viable load-shedding actions that are guaranteed to save the system from voltage collapse events.

\section{The feasibility of input-affine systems} \label{sec:affinput}

In this section, we develop our general approach for deriving sufficient conditions for the existence of a feasible solution to a nonlinear constrained system of equations. Our approach applies to a broader class of nonlinear systems that satisfy two essential properties: they depend affinely on the inputs (parameters), and their nonlinearity can be expressed as a linear combination of nonlinear terms that admit simple bounds. The derived results for these general nonlinear systems are applied to the ACOPF problem in Section \ref{sec:OPFfeas}.

Consider a following set of nonlinear equations on $x \in \mathbb{R}^n$ that depend affinely on the input vector $u \in \mathbb{R}^k$:
\begin{equation} \label{eq:abstract}
 M f(x) =  M f(x^\star) + R (u - u^\star).
\end{equation}
Here, $x^\star$ is the base solution of the problem \eqref{eq:abstract} which corresponds to the base parameter vector $u^\star$, $M\in \mathbb{R}^{n\times m}$,  $R \in \mathbb{R}^{n \times k}$, and $f=[f_1;f_2;\ldots;f_m]:\mathbb{R}^n \to \mathbb{R}^m$ is the set of nonlinear ``primitives'', i.e. simple nonlinear terms usually depending on few variables. In the power system context, the problem \eqref{eq:abstract} represents the power flow equations \eqref{eq:pf} or \eqref{eq:ySx}; the base solutions is typically the base operating point which is assumed to be feasible. Feasibility conditions is represented by a set of linear and nonlinear constraints with the latter represented in the form
\begin{align}
     h(x)    &\leq 0. \label{eq:feasconst}
\end{align}
Without any loss of generality, we assume that $h(x): \mathbb{R}^n \to \mathbb{R}^l$ can be expressed in terms of the same nonlinear primitives function $f(x)$, i.e., $h(x)=T f(x)$ with $T\in\mathbb{R}^{l\times m}$. Also, we assume that $h(x^\star) \leq 0$ in our construction framework. For the ACOPF problem, the constraint \eqref{eq:feasconst} is a compact representation of the set of nonlinear constraints \eqref{eq:qgen}-\eqref{eq:imax}; the corresponding forms of matrix $T$ to these constraints will be discussed later in Section \ref{sec:iqconst}. The Jacobian of the corresponding system of equations is defined as
\begin{align}
J(x)=M
\left.\frac{
\partial f}{\partial x}
\right|_x,\, [J(x)]_{ij}=\sum_{k=1}^m M_{ik} \frac{\partial f_k}{\partial x_j},\label{eq:Jac}
\end{align}
where $ J_\star = J(x^\star) $ refers to the Jacobian evaluated at the base solution that we assume to be non-singular. 

In the following paragraphs, we will derive a fixed-point representation of the system \eqref{eq:abstract} that allows for simple certification of solution existence. This solvability certificate is based on the bounds on nonlinearity in the neighborhood of the base operating point. Compact representation of the corresponding convex regions in the input, state, and nonlinear image spaces will be introduced below to facilitate our certificate development.

%\begin{definition} \label{def:error}
For any differentiable nonlinear map $f: \mathbb{R}^n \to \mathbb{R}^m$ and a base point $x^\star \in \mathbb{R}^n$, we define nonlinear ``residual'' maps $\delta f(x)$ and $\delta_2 f(x)$ as the combinations of $f(x)$, base value $f^\star = f(x^\star)$, and base Jacobian $ \partial f/\partial x|_{x^\star}$ via the following:
\begin{align}
 \delta f(x) &= f(x) - f^\star  \\
 \delta_2 f(x) &= f(x) - f^\star -  \frac{\partial f}{\partial x}\Big|_{x=x^\star}\cdot(x - x^\star).
\end{align}

The defined residual operator allows for compact representation of the original system \eqref{eq:abstract} in the fixed-point form:
\begin{equation} \label{eq:fixedpointfull}
 x =  x^\star + J_\star^{-1}R(u-u^\star) - J_\star^{-1} M \delta_2 f(x),
\end{equation}
or, even more compactly as 
\begin{equation} \label{eq:fixedpoint}
\xdiff = F(\xdiff;\udiff)= J_\star^{-1} R \udiff + \phi(\xdiff).
\end{equation}

Here we use $\xdiff  = x - x^\star$ and $\udiff  = u - u^\star$ to denote the deviations of the variable and input vector from the base point. $\phi(\xdiff) = - J_\star^{-1} M \delta_2 f(x^\star + \xdiff)$ represents the nonlinear corrections. It is important to note that the Jacobian $J$ is not $\partial f/\partial x$, but the one defined in \eqref{eq:Jac}. %Further assume that $\udiff = R \delta u$.

Next, we assume that the input deviations $\udiff$ belong to a structured box-shaped uncertainty set $\mathcal{U}$ defined as $ -\limu^-\leq \udiff \leq \limu^+$, or more compactly as $ \mathcal{U}(\limu) = \left\{\udiff \in \mathbb{R}^k|\, {\pm \udiff  \leq \limu^\pm}\right\}$. Also, we use $\limu$ to denote $\limu=[\limu^-;\limu^+]$.

Having the fixed point form \eqref{eq:fixedpoint}, we will apply the Brouwer's fixed point theorem (Chapter 4, Corollary 8 in \cite{spanier1994algebraic}) stated below to derive a sufficient condition for solvability.

\begin{theorem}[Brouwer's fixed point theorem]
Let $F: \mathcal{X} \mapsto \mathcal{X}$ be a continuous map where $\mathcal{X}$ is a compact and convex set in $\mathbb{R}^n$. Then the map has a fixed point in $\mathcal{X}$, namely $F\br{x}=x$ has a solution in $\mathcal{X}$.
\end{theorem}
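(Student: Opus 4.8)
Since the statement is the classical Brouwer fixed point theorem, cited here from algebraic topology, the plan is to reproduce the standard topological argument rather than anything specific to power flow. The overall strategy is: normalize $\mathcal{X}$ to a closed ball, assume no fixed point exists, build a retraction of the ball onto its boundary sphere, and derive a contradiction from the fact that no such retraction can exist.

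First I would reduce the general compact convex set $\mathcal{X}$ to a canonical model. If $\mathcal{X}$ is a single point the claim is trivial, so assume it has dimension $d \geq 1$ in its affine hull. A compact convex set with nonempty interior relative to its affine hull is homeomorphic to the closed unit ball $D^d$ with boundary sphere $S^{d-1}$. Because the fixed-point property is a topological invariant --- if $\mathcal{X} \cong \mathcal{Y}$ via a homeomorphism $\psi$ and $g = \psi \circ F \circ \psi^{-1}$ fixes some $y$, then $\psi^{-1}(y)$ is fixed by $F$ --- it suffices to prove the theorem for $\mathcal{X} = D^d$.

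Next I would argue by contradiction. Suppose $F: D^d \to D^d$ is continuous with $F(x) \neq x$ for every $x$. For each $x$ the two distinct points $F(x)$ and $x$ determine a unique ray emanating from $F(x)$ through $x$; let $r(x)$ be the point at which this ray meets $S^{d-1}$. Solving the associated quadratic shows $r(x)$ depends continuously on $x$, the discriminant remaining positive precisely because $F(x) \neq x$, and for $x \in S^{d-1}$ the ray already exits at $x$, so $r(x) = x$. Thus $r: D^d \to S^{d-1}$ is a continuous retraction of the ball onto its boundary.

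The crux, and the only genuinely hard step, is the no-retraction theorem: no continuous retraction of $D^d$ onto $S^{d-1}$ exists. I would establish it functorially. Writing $\iota: S^{d-1} \hookrightarrow D^d$ for the inclusion, a retraction satisfies $r \circ \iota = \mathrm{id}_{S^{d-1}}$; applying reduced integral homology $\tilde{H}_{d-1}(\cdot)$ gives $r_* \circ \iota_* = \mathrm{id}$ on $\tilde{H}_{d-1}(S^{d-1}) \cong \mathbb{Z}$. But $\iota_*$ factors through $\tilde{H}_{d-1}(D^d) = 0$ since $D^d$ is contractible, so the identity on $\mathbb{Z}$ would factor through the trivial group, a contradiction. (For $d = 1$ this can be replaced by the intermediate value theorem, since a retraction $[-1,1] \to \{-1,1\}$ would be a continuous surjection onto a disconnected set.) This contradiction forces $F$ to have a fixed point. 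A combinatorial alternative I would keep in reserve is Sperner's lemma applied to successively finer triangulations of a simplex, extracting the fixed point by compactness; it trades the homology input for a limiting argument.
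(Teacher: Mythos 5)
Your proof is correct: the reduction of a compact convex set to the closed ball $D^d$, the ray-retraction construction from a hypothetical fixed-point-free map, and the no-retraction theorem via reduced homology (with the intermediate value theorem fallback for $d=1$) constitute the standard, complete argument for Brouwer's theorem. Note that the paper itself does not prove this statement at all --- it is quoted as a classical result and cited to Spanier's \emph{Algebraic Topology} (Chapter 4, Corollary 8), where essentially your argument appears --- so your write-up is a valid self-contained substitute for that citation rather than a departure from anything in the paper.
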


In this work, we propose to use a parameterized polytope representation of $\mathcal{X} = \mathcal{A}(\limy)$ with a fixed face orientation but varying distances to individual faces. We refer to this set as an ``admissibility polytope''.

Formally, the admissibility polytope family $\mathcal{A}(\limy)$ defines the following non-empty polytope for any non-zero matrix of bounds $\limy = [-\limy^-, \limy^+] \in\mathbb{R}_+^{n\times 2}$:
\begin{equation} \label{eq:Alimy}
 \mathcal{A}(\limy)  = \{\xdiff \in\mathbb{R}^n| \pm A \xdiff \leq \limy^\pm \}
\end{equation}
%\end{definition}
where $\pm A \xdiff \leq \limy^\pm$ is shorthand for the following:
\begin{align*}
\{A \xdiff \leq  \limy^+, 
-A \xdiff \leq \limy^-\}
\equiv -\limy^- \leq A \xdiff \leq \limy^+.
\end{align*}

The polytope $\mathcal{A}(\limy)$ will be used to establish the neighborhoods of the base operating point where the solution is guaranteed to exist. Extra conditions of the form $\limy \leq \limy^{\max}$ can be then used to impose the linear ``feasibility'' constraints, such as voltage and line angle difference bounds \eqref{eq:v} -- \eqref{eq:pgen}. The choice of the matrix $A$ has a critical effect on the performance of our algorithm and will be discussed in more detail later.

To prove the existence of the solution upon choosing matrix $A$, we need to show the self-mapping of this polytope under the nonlinear map $F(\xdiff;\udiff)$. The most critical step in certifying the self-mapping is the establishment of nonlinearity bounds. Specifically, we assume that whenever the system is inside the admissibility polytope, i.e., $\xdiff\in\mathcal{A}(\limy)$, the nonlinear residual terms $\delta_2 f \in \mathbb{R}^m$ can also be naturally bounded based explicitly on $\limy$. These bounds are constructed explicitly for the power flow problem in the later parts of the paper. For a more general case, we formalize them by introducing the ``bounding polytope family''  $\mathcal{N}(\limy)$ that certifies the following bounds on the nonlinear map $f: \mathbb{R}^n \to \mathbb{R}^m$:
\begin{equation}
\xdiff \in \mathcal{A}(\limy) \Longrightarrow \delta_2 f(x^\star + \xdiff ) \in \mathcal{N}(\limy).
\end{equation}
%\end{definition}
In other words, the nonlinear bounding polytope family establishes a bound on the nonlinear function given the bounds on its argument. In this work, we rely on the simple, though not particularly tight, bounds illustrated on Figure \ref{fig:nonlinearbound} and formally represented by the functions $\delta_2f_k^\pm(\limy)$
\begin{equation} \label{eq:Nlimy}
 \mathcal{N}(\limy) =  \{z\in \mathbb{R}^m|\, {\pm z_k} \leq \delta_2f_k ^\pm(\limy)\}.
\end{equation}

This condition can be also considered as a formal definition of the function $\delta_2 f_k^\pm(\limy)$. 
\begin{remark} \label{rem:1} By definition, the functions $\delta_2 f_k(x)$ have zero derivatives at $x= x^\star$. Thus, for continuous and smooth maps $f$, there exists a nonlinear bound $\delta_2 f_k^\pm(\limy)$ such that $\|\delta_2 f^\pm_k(\limy)\|/\|\limy\| \to 0$ as $\limy \to 0$. In other words, the magnitude of the nonlinear residuals can be bounded by a superlinear function in the vicinity of the the operating point.
\end{remark}

Given the definitions of the three convex polytopes $\mathcal{U}(\limu)$, $\mathcal{A}(\limy)$, and $\mathcal{N}(\limy)$, we can restate the Brouwer fixed point theorem in the following form:
\begin{figure}[t]
    \centering
    \includegraphics[width=0.9\columnwidth]{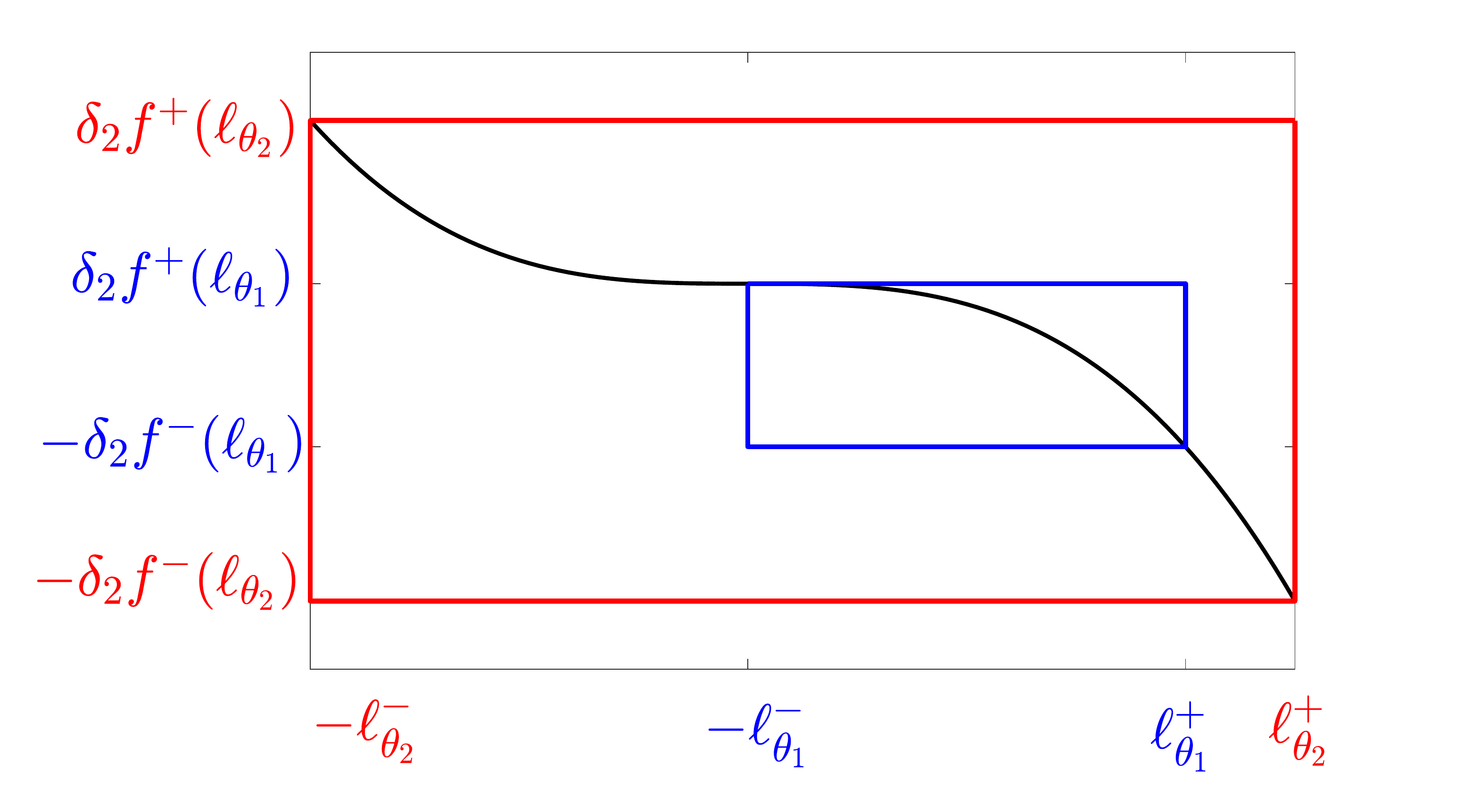}
	\caption{Nonlinear bounds for $\delta_2\{\sin(\theta)\} = \sin(\theta)-\theta$ for two values of $\ell_\theta$: $\ell_{\theta_1} = [0, 0.4]$ and $\ell_{\theta_2} = [0.4, 0.5]$}
		\label{fig:nonlinearbound}
\end{figure}
\begin{corollary}(Self-mapping condition) \label{cor:selfmaptheo} 
Suppose there exist bounds $\limy, \limu$ such that 
\begin{align}
\xdiff \in \mathcal{A}(\limy), \,\udiff \in \mathcal{U}(\limu) \implies F(\xdiff;\udiff) \in \mathcal{A}(\limy). \label{eq:selfmaptheo}    
\end{align}
Then \eqref{eq:fixedpoint} is solvable for all $\udiff \in \mathcal{U}(\limu)$ and has at least one solution inside admissibility polytope: $\xdiff \in \mathcal{A}(\limy)$. \end{corollary}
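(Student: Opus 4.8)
The plan is to read this statement as a direct consequence of the Brouwer fixed point theorem stated above, with the self-mapping hypothesis \eqref{eq:selfmaptheo} supplying precisely the ``maps into itself'' requirement. First I would fix an arbitrary input deviation $\udiff \in \mathcal{U}(\limu)$ and treat it as a constant, so the object of interest becomes the single-argument map $G(\xdiff) := F(\xdiff;\udiff)$. The goal then reduces to exhibiting a fixed point of $G$ in $\mathcal{A}(\limy)$, since by the fixed-point form \eqref{eq:fixedpoint} such a point is exactly a solution of the original system at that $\udiff$.

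Next I would verify the three hypotheses of Brouwer's theorem for $G$ on the domain $\mathcal{A}(\limy)$. Convexity is immediate: by \eqref{eq:Alimy} the set $\mathcal{A}(\limy)=\{\xdiff\in\mathbb{R}^n \mid -\limy^- \leq A\xdiff \leq \limy^+\}$ is an intersection of half-spaces, hence a convex polytope. For compactness I would argue that, with $A$ square and chosen nonsingular (consistent with the standing assumption that $J_\star$ is nonsingular), the two-sided constraints $-\limy^- \leq A\xdiff \leq \limy^+$ confine $\xdiff$ in every coordinate direction, so $\mathcal{A}(\limy)$ is closed and bounded, therefore compact. Continuity of $G$ follows from its explicit form $F(\xdiff;\udiff) = J_\star^{-1} R \udiff + \phi(\xdiff)$ with $\phi(\xdiff) = -J_\star^{-1} M \delta_2 f(x^\star + \xdiff)$: the first term is affine, and because $f$ is differentiable the residual $\delta_2 f$ is continuous, so $\phi$ and hence $G$ are continuous.

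With these three properties established and the hypothesis \eqref{eq:selfmaptheo} guaranteeing $G(\mathcal{A}(\limy)) \subseteq \mathcal{A}(\limy)$, I would invoke Brouwer's theorem to obtain a point $\xdiff \in \mathcal{A}(\limy)$ satisfying $G(\xdiff)=\xdiff$, i.e. $F(\xdiff;\udiff)=\xdiff$. Since \eqref{eq:fixedpoint} is an equivalent rewriting of \eqref{eq:abstract}, this $\xdiff$ solves the original equations, and as $\udiff \in \mathcal{U}(\limu)$ was arbitrary, solvability holds throughout the uncertainty set with every solution lying in the admissibility polytope.

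I do not expect a genuine obstacle, since the statement is essentially a repackaging of Brouwer. The one point deserving care --- and the closest thing to a sticking point --- is the compactness of $\mathcal{A}(\limy)$: Brouwer's conclusion fails on unbounded domains, so the argument implicitly needs the face matrix $A$ to have full column rank, ensuring the box constraints on $A\xdiff$ actually bound $\xdiff$. I would make this nonsingularity assumption on $A$ explicit, observing that it is natural given that the entire construction already presupposes an invertible base Jacobian $J_\star$.
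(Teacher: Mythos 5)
Your proposal is correct and takes essentially the same route as the paper: the paper treats this corollary as a direct restatement of Brouwer's theorem, and its only written-out argument (in the proof of Theorem~\ref{th:main} in Appendix~\ref{app:validatecons}) is exactly your argument --- fix $\udiff$, note that the self-mapping hypothesis makes $F(\cdot\,;\udiff)$ map the compact convex polytope $\mathcal{A}(\limy)$ into itself, and invoke the fixed-point theorem. Your side remark that compactness of $\mathcal{A}(\limy)$ requires $A$ to have full column rank (which does not follow from nonsingularity of $J_\star$ and must be assumed separately) is a legitimate point that the paper leaves implicit.
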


In Appendix \ref{app:validatecons}, we derive explicit condition on $\limy, \limu$ that guarantee satisfaction of \eqref{eq:selfmaptheo}. The condition allows us to formulate the following optimization problem which is a central result of this paper. 

\textbf{Construction of optimal inner approximation.} 
Given the definitions introduced above, assume that the quality of the inner approximation is given by the cost function $ c(\limy, \limu)$. Then the best approximation can be constructed by solving the following optimization problem:
 \begin{align}
 \label{eq:optimalextension}
 & \max_{\limy, \limu} \, c(\limy, \limu)\quad \mathrm{subject\, to:} \\
 & \limy^\pm \geq B^+ \limu^\pm + B^- \limu^\mp +C^+ \delta_2 f^{\pm}(\limy) + C^{-}\delta_2 f^{\mp}(\limy)\nonumber\\
 & \limy \leq\limy^{\max} \nonumber\\
 & D^+ \limu^+ + D^- \limu^- + E^+\delta_2 f^+(\limy) + E^-\delta_2 f^-(\limy) + h_\star \leq 0. \nonumber
\end{align}

Here $h_\star = h(x_\star)$ and the matrices $B, C ,D, E $ are given by $B= A J_\star^{-1} R$, $C=- A J_{\star}^{-1} M$, $D = TLJ_\star^{-1} R$, $E = T - TLJ_\star^{-1} M$, and $L = \partial f/\partial x|_{x_{\star}}$. The upper indices $^\pm$ on the matrices refer to the absolute values of the positive and negative parts of these matrices, i.e., for any matrix $F$, $F = F^+ - F^-$ with $F^{\pm}_{ij} \geq 0$.

Problem \eqref{eq:optimalextension} is a non-convex and nonlinear program in the space of uncertainty and admissibility polytope shapes $\limu, \limy$. Although the optimal solution to this problem cannot be reliably found in polynomial time, any feasible solution to \eqref{eq:optimalextension} establishes a certified inner approximation of the steady-state security set. Therefore, the natural strategy is to solve this problem is based on local optimization schemes. The local search is guaranteed to produce some feasible certificate because sufficiently small values of $\limy$ are always feasible. This follows from the superlinear behavior of $\delta_2f^\pm(\limy)$ in the neighborhood of $\limy = 0$ as discussed in Remark \ref{rem:1}.

Due to the nonlinear and non-convex nature of the optimization problem, the quality of the resulting certificate depends critically on the initial guess used for initialization of the local search. In our experiments, we have found that one viable strategy of finding good initialization points lies in using the solution to the linear relaxation of \eqref{eq:optimalextension} to initialize the nonlinear local search. The idea behind the relaxation based approach to initialization is very simple. The main source of nonlinearity in \eqref{eq:optimalextension} is the $\delta_2 f^\pm (\limy)$ terms entering the self-mapping condition and nonlinear feasibility constraints. These constraints can be relaxed into linear ones by considering linear upper bounds on $\delta_2 f^\pm(x)$. Obviously, any feasible solution to this linear problem satisfies the original problem as well and establishes a valid certificate. This certificate is generally more conservative than the one obtained by solving a full nonlinear problem as the bounds on the nonlinearity are less tight. Moreover, the linear nature of the relaxed constraints invalidates Remark \ref{rem:1} guaranteeing the existence of some feasible solution. Indeed, the resulting set of linear constraints may be infeasible, although this does not happen in our experiments. On the other hand, the efficiency of modern linear programming solvers allows for an extremely fast construction of some polytope that can be later improved via local nonlinear search. The details of this relaxation are presented in Appendix \ref{app:ext}.

The objective function $c(\limy, \limu)$ can be tailored to a specific use case of the resulting polytope. Below we present three possible choices for the certificate quality function $c(\limy, \limu)$  for common problems arising in power system analysis.

\subsection{Natural certificate quality functions $c(\limy, \limu)$}
\subsubsection{Maximal feasible loadability set}
A classical problem arising in many domains, including power systems, is to characterize the limits of system loadability characterized by a single stress direction in injection space. In this case, the only column of the matrix $R$ defines the stress direction, while the positive scalar $u$ defines the feasible loadability level.  The goal of the feasible loadability analysis is to find the maximal level $u$ for which the solution still exists and is feasible. According to Corollary \ref{cor:ubox}, all $u$ satisfying $ u^\star -\limu^- \leq u \leq  u^\star + \limu^+$ are certified to have a solution, hence the objective function of \eqref{eq:optimalextension} is simply $c(\limy, \limu) = \limu^+$. Of course, the simple loadability problem can be more easily solved using more traditional approaches like the continuation method.

\subsubsection{Robustness certificate} \label{sec:robustness}
In another important class of applications, the goal is to characterize the robustness of a given solution concerning some uncertain inputs. In power system context, it could be the robustness concerning load or renewable fluctuations. Assume, without the loss of generality, that the input variable representation is chosen in a way that all the components of $\udiff$ are uniformly uncertain with uncertainty set centered at zero. In this case, the goal is to find the largest value of $\lambda$ such that the feasible solution exists for any $\udiff$ satisfying $|\udiff_k| \leq \lambda$ for all $k$. This problem can be naturally solved by maximizing $c(\limy, \limu) = \lambda$ with an extra set of uniformity constraints $[\limu^\pm]_k = \lambda$ ensuring that the box $\mathcal{U}$ represents a symmetric cube.

It is worth mentioning that the problem of assessing the robustness of a power system under renewable fluctuations is closely related to the ``dispatchable region'' problem considered in \cite{6949701, 7079527}, and also discussed under the notion of Do-Not-Exceed limits in \cite{6856230,7024953,7453153}. Our technique allows the same kind of limits to be constructed without relying on any linearized power flow approximations utilized in these previous works.

\subsubsection{Chance certificates}
In another popular setting, one assumes some probability distribution of the uncertain inputs and aims to find a polytope that maximizes the chance of a randomly sampled input being certified to have a solution. This problem can also be naturally represented in the generic certificate optimization framework. Assume, without major loss of generality, that the inputs are i.i.d. normal variables. In this case, the log-probability $\mathbf{P}$ of a random sample falling inside the box $\mathcal{U}(\limu)$ is given by 
\begin{equation} \label{eq:chancer}
    \log \mathbf{P}(\limu) = \frac{1}{2}\sum_k \mathrm{erf}\left(\frac{[\limu^+]_k}{\sqrt{2}}\right) - \mathrm{erf}\left(-\frac{[\limu^-]_k}{\sqrt{2}}\right).
\end{equation}
In \eqref{eq:chancer}, $\mathrm{erf}()$ is the ``error function'' or ``Gauss error function'' used in probability theory. Its interpretation is as the following: given a random variable $Z$ normally distributed with mean $0$ and variance $1/2$, $\mathrm{erf}(x)$ represents the probability of $Z$ falling in the range $[-x, x]$ \cite{andrews1992special}. Then the maximal chance certificate is established by maximizing $c(\limy, \limu) = \log \mathbf{P}(\limu)$ within the central optimization problem \eqref{eq:optimalextension}.

\subsection{Selection of matrix $A$} 
Choice of the matrix A has an enormous effect on the performance of the approach and the quality of the resulting certificates. The implications of the choice of $A$ are two-fold: first it determines the shape of the region that is certified to be self-mapped by the fixed representation of the equations. And second, the conditions imposed by the constraint $\pm Ax \leq \limy^\pm$ are used for bounding the nonlinearities and verification of the self-mapping. Although more research is needed to determine the relative importance of these two effects caused by the choice of the matrix $A$, generally one could expect that the effect on the nonlinearity bounds is more pronounced. Hence, it is recommended to choose the matrix $A$ in a way that allows for effective nonlinear bounds. The matrix $A$ should be “compatible” with the representation of the nonlinear terms. In other words, the inequalities $\pm A \xdiff \leq \limy^\pm$ should allow for straightforward and non-conservative bounds of the individual nonlinear terms. Given the bounding approach employed in this work, the least conservative bounds are obtained when $A$ bounds individual variables entering in the nonlinear expressions. However, more general choices of $A$ can be possible for other nonlinearities encountered in different representations. As a side note, addition of extra rows to the matrix $A$ that are not utilized in bounding of nonlinearity will only complicate the certification of the self-mapping and will not improve the resulting polytope.

\section{Steady-state security} \label{sec:OPFfeas}
In this section, we apply the framework developed in previous sections to the power flow steady-state security problem. There are many possible ways to represent the power flow equations in the form \eqref{eq:abstract} amenable to the algorithm application. Moreover, the representation of the equation and the choice of nonlinear functions and their variables has a dramatic effect on the size of the resulting regions. We have experimented with a variety of formulations, in particular, traditional polar and rectangular forms of power flow equations with different choices of matrix $M$ and function $f$ in \eqref{eq:abstract}. Our experiments have indicated that the choice of the representation, accuracy of the nonlinear bounds, and the quality of the initial guess all strongly affect the conservativeness of the resulting certificates. A more thorough analysis of these factors is required to identify the most effective strategies of applying the algorithm to the power flow problem. These strategies will be discussed in details in our future work. In this paper, we present only one formulation that resulted in the least conservative regions for large-scale systems in our experiments. This formulation is based on the admittance representation of the power flow equations and nonlinear terms associated with power lines. It can naturally deal with strong (high admittance) power lines in the system that we suspect to be one of the sources of conservativeness in other more conventional formulations.

\subsection{Admittance based representation of power flow} \label{sec:admrep}
The power formulation discussed below is based on a non-traditional combination of node-based variables and branch-based nonlinear terms. This representation is naturally constructed using the weighted incidence type matrices. Specifically, we use $y^d \in \mathbb{C}^{|\mathcal{V}|}$ to represent the diagonal of the traditional admittance matrix, and matrices $Y^{f}, Y^t \in \mathbb{C}^{|\mathcal{V}|\times |\mathcal{E}|}$ as weighted incidence matrices representing the admittances of individual elements with $Y^f$ corresponding to the negative admittance of lines starting at a given bus, and $Y^t$ corresponding to the admittance of lines ending at it. Any power line with admittance $y_e$ connecting the bus $\mathrm{from}(e)$ to the bus $\mathrm{to}(e)$ contributes to the two elements in the matrices $Y^f$ and $Y^t$, specifically, $Y^f_{\mathrm{from(e)},e} = - y_e$ and $Y^t_{\mathrm{to}(e),e} = y_e$. In this setting, for a bus $k$ consuming the complex current $i_k$, the Kirchoff Current Law takes the form
\begin{equation} \label{eq:ik}
 i_k = y^d_k v_k + \sum_e Y^f_{ke} v_{\mathrm{to}(e)} + Y^t_{ke} v_{\mathrm{from}(e)}. 
\end{equation}
Whenever the PQ bus is considered with constant complex power injection $s_k$, one also has $\overline{i_k} = s_k/v_k$. Next, we introduce the logarithmic voltage variables as $\rho_k = \log (V_k)$, $\rho_e = \rho_{\mathrm{from}(e)} -\rho_{\mathrm{to}(e)}$, and rewrite the power flow equations in the following form:
\begin{align} \label{eq:admform}
 y_\mathcal{V} - y^d_\mathcal{V} &= Y^t_{\mathcal{V}\mathcal{E}} \mathrm{e}^{\rho_\mathcal{E} + j \theta_\mathcal{E}} + Y^f_{\mathcal{V}\mathcal{E}}\mathrm{e}^{-\rho_\mathcal{E} - j \theta_\mathcal{E}}
\end{align}
where $y_k = \overline{s_k}/|v_k|^2$. Assuming that the base solution is given by $\rho^\star, \theta^\star$ the equations can be rewritten as
\begin{equation}
y_\mathcal{V} - y^d_\mathcal{V} =  \hat{Y}^t_{\mathcal{V}\mathcal{E}} \mathrm{e}^{\delta \rho_\mathcal{E} + j \delta\theta_\mathcal{E}} + \hat{Y}^f_{\mathcal{V}\mathcal{E}}\mathrm{e}^{-\delta \rho_\mathcal{E} - j \delta \theta_\mathcal{E}}
\end{equation}
with $\hat{Y}^t = Y^t [\![ \exp(\rho_\mathcal{E}^\star + j \theta_\mathcal{E}^\star) ]\!]$ and $\hat{Y}^f = Y^f [\![ \exp(-\rho_\mathcal{E}^\star - j \theta_\mathcal{E}^\star)]\!]$ where the notation $[\![\cdot ]\!]$ is used to define the diagonal matrices. This equation can be further simplified to 
\begin{equation}
y_\mathcal{V} - y_\mathcal{V}^d =  Y^+_{\mathcal{V}\mathcal{E}} \cosh(\delta \rho_\mathcal{E} + j \delta\theta_\mathcal{E}) + Y^-_{\mathcal{V}\mathcal{E}}\sinh(\delta \rho_\mathcal{E} + j \delta \theta_\mathcal{E})
\end{equation}
with the help of $Y^\pm = \hat{Y}^t\pm\hat{Y}^f$. Finally, using $y - y^d =  g + j  b$ we obtain
\begin{equation} \label{eq:gbv}
\begin{bmatrix}
g_\mathcal{V} \\
b_\mathcal{V}
\end{bmatrix} = 
\begin{bmatrix}
\mathrm{Re}(Y^+) & \mathrm{Re}(Y^-) & 
-\mathrm{Im}(Y^-) & -\mathrm{Im}(Y^+)\\
\mathrm{Im}(Y^+) & \mathrm{Im}(Y^-) & 
\mathrm{Re}(Y^-)  & \mathrm{Re}(Y^+) 
\end{bmatrix} f(x)\nonumber
\end{equation}
where 
\begin{equation}
f(x)= \begin{bmatrix}
 \cosh\delta\rho_\mathcal{E}\odot\cos\delta\theta_\mathcal{E} \\
 \sinh\delta\rho_\mathcal{E}\odot\cos\delta\theta_\mathcal{E} \\
 \cosh\delta\rho_\mathcal{E}\odot\sin\delta\theta_\mathcal{E} \\
 \sinh\delta\rho_\mathcal{E}\odot\sin\delta\theta_\mathcal{E}
\end{bmatrix}.
\end{equation}

The nonlinear vector $f(x)$ consists of element-wise products of the hyperbolic and trigonometric functions. To find the bounds $\delta_2 f^\pm$ which are required in the steady-state security set construction \eqref{eq:optimalextension}, we apply nonlinear bounding introduced in Appendix \ref{app:nlbounds}. 

Like the traditional power flow equations, we use $x = (\theta_\mathcal{G}, \theta_\mathcal{L}, \rho_\mathcal{L})$ and consider only a subset of all the equations which are compatible with the general form \eqref{eq:abstract}:
\begin{align} \label{eq:ySx}
\begin{bmatrix}
g_\mathcal{G} \\
g_\mathcal{L} \\
b_\mathcal{L}
\end{bmatrix} &
\!= \!
\begin{bmatrix}
\mathrm{Re}(Y^+_{\mathcal{G}\mathcal{E}}) & \mathrm{Re}(Y^-_{\mathcal{G}\mathcal{E}}) & 
-\mathrm{Im}(Y^-_{\mathcal{G}\mathcal{E}}) & -\mathrm{Im}(Y^+_{\mathcal{G}\mathcal{E}})\\
\mathrm{Re}(Y^+_{\mathcal{L}\mathcal{E}}) & \mathrm{Re}(Y^-_{\mathcal{L}\mathcal{E}}) & 
-\mathrm{Im}(Y^-_{\mathcal{L}\mathcal{E}}) & -\mathrm{Im}(Y^+_{\mathcal{L}\mathcal{E}})\\
\mathrm{Im}(Y^+_{\mathcal{L}\mathcal{E}}) & \mathrm{Im}(Y^-_{\mathcal{L}\mathcal{E}}) & 
\mathrm{Re}(Y^-_{\mathcal{L}\mathcal{E}})  & \mathrm{Re}(Y^+_{\mathcal{L}\mathcal{E}}) 
\end{bmatrix}\! f(x).
\end{align}

We define the parameter vector $\, \udiff(V) = (p/V^2 - p^\star/(V^\star)^2, -q/V^2 + q^\star/(V^\star)^2)$ which consists of nodal admittance variations associated to the left hand side of \eqref{eq:admform}, respectively. Further, let $V^{\min} = V^\star - \delta V^-$ and $V^{\max} = V^\star + \delta V^+$ represent the minimum and maximum voltage levels associated with the voltage deviation bounds. Assume that the nonlinear term $\phi(\xdiff)$ is contained in the polytope $\mathcal{A}(\tau)$ where $\tau$ is defined in Lemma \ref{le:tau}. Then Corollary \ref{cor:ytop} below is essential to characterize the admissible range of nodal power injections for a given feasible range of admittance variations.

\begin{corollary}[Admissible power injection estimation] \label{cor:ytop}Given that $V^{\min} \leq V \leq V^{\max}$, if the active power injection satisfies
\begin{equation} \label{eq:prange}
p^{\min}\leq p \leq p^{\max},
\end{equation}
then the admittance deviation can be bounded as $ \pm \udiff \leq \limu^\pm$. Here $p^{\max} = (V^{\min})^2\left(\limu^+ +p^\star/(V^\star)^2\right)$, and 
\begin{equation}
    p^{\min} = \begin{cases}
    (V^{\min})^2\left(-\limu^- + \frac{p^\star}{(V^\star)^2}\right) & \text{if} -\limu^- +\frac{p^\star}{(V^\star)^2} < 0 \\
    (V^{\max})^2\left(-\limu^- +\frac{p^\star}{(V^\star)^2}\right) & \text{otherwise.}
    \end{cases}
\end{equation}
Similar bounds apply to the nodal reactive power.
\end{corollary}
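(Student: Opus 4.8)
The plan is to peel the vector membership $\udiff\in\mathcal{U}(\limu)$ down to its active-power block and then isolate $p$. Writing the active component of the parameter map as $\udiff_p = p/V^2 - p^\star/(V^\star)^2$ and abbreviating $a = p^\star/(V^\star)^2$, the definition of $\mathcal{U}(\limu)$ gives that $\pm\udiff\leq\limu^\pm$ is, on this block, the two-sided scalar bound $a-\limu^-\leq p/V^2\leq a+\limu^+$. Since $V^2>0$, multiplying through preserves both inequality directions, so the bound is equivalent to $V^2(a-\limu^-)\leq p\leq V^2(a+\limu^+)$.

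First I would note that $V$ is only constrained to the interval $[V^{\min},V^{\max}]$, so to guarantee $\udiff_p\in[-\limu^-,\limu^+]$ for every admissible voltage I need $p$ to respect the tightest version of the bound over the whole interval. The extremal voltages are read off from the elementary monotonicity of $V\mapsto V^2 c$ on the positive axis: increasing if the constant $c\geq0$, decreasing if $c<0$, so the relevant endpoint flips with $\mathrm{sign}(c)$. For the upper bound, $c = a+\limu^+\geq0$ (using $\limu^+\geq0$ together with the implicit assumption that the base injection keeps $a+\limu^+$ nonnegative), the map is minimized at $V=V^{\min}$, which reproduces $p^{\max}=(V^{\min})^2(\limu^++p^\star/(V^\star)^2)$. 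For the lower bound the constant is $c = a-\limu^-$, whose sign is undetermined, producing exactly the case split of the statement: if $a-\limu^-<0$ the map is decreasing and its maximum over the interval sits at $V^{\min}$, while if $a-\limu^-\geq0$ it is increasing and the maximum sits at $V^{\max}$, yielding the two branches of $p^{\min}$.

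With the endpoints identified, the forward implication is immediate: fixing any $V\in[V^{\min},V^{\max}]$ and $p\in[p^{\min},p^{\max}]$, the endpoint comparisons give $V^2(a-\limu^-)\leq p^{\min}\leq p\leq p^{\max}\leq V^2(a+\limu^+)$, hence $p/V^2\in[a-\limu^-,a+\limu^+]$ and $\udiff_p\in[-\limu^-,\limu^+]$. The reactive-power bound follows by the same argument applied to the second block $\udiff_q = -q/V^2+q^\star/(V^\star)^2$; the leading minus sign merely interchanges the two voltage endpoints but leaves the structure of the derivation unchanged. The only delicate point — and the main, if modest, obstacle — is the bookkeeping of inequality directions: multiplication by the positive factor $V^2$ never flips them, but the choice of the binding voltage endpoint does flip with $\mathrm{sign}(c)$, and this is precisely what generates the case distinction in $p^{\min}$; I would also make explicit the sign hypothesis on $a+\limu^+$ under which $p^{\max}$ needs no analogous split.
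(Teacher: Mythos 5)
Your proof is correct and takes essentially the same route as the paper's: both invert the definition of $\udiff$ into the two-sided bound $V^2\left(-\limu^- + p^\star/(V^\star)^2\right) \leq p \leq V^2\left(\limu^+ + p^\star/(V^\star)^2\right)$ and then identify $p^{\max}$ and $p^{\min}$ as the min/max of these endpoint expressions over $V\in[V^{\min},V^{\max}]$, with the sign of the coefficient determining which voltage endpoint binds. You are in fact somewhat more explicit than the paper, which states the min/max definitions without spelling out the monotonicity argument, the resulting case split, or the implicit sign assumption on $\limu^+ + p^\star/(V^\star)^2$ that spares $p^{\max}$ an analogous case distinction.
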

\begin{proof}
Combining the definition of the admittance deviation $\udiff(V) = p/V^2 - p^\star/(V^\star)^2$ and the condition $\pm \udiff \leq \limu^\pm$, yields
\begin{equation}
V^2\left(-\limu^- + p^\star/(V^\star)^2\right) \leq p \leq V^2\left(\limu^+ +p^\star/(V^\star)^2\right). \nonumber
\end{equation}

Next, the admissible range of active power is given by $p^{\max} = \min_{V}V^2\left(\limu^+ +p^\star/(V^\star)^2\right)$ and $p^{\min} = \max_{V}V^2\left(-\limu^- +p^\star/(V^\star)^2\right)$. 
\end{proof}

\subsection{Thermal and reactive power constraints} \label{sec:iqconst}
Lemma \ref{lem:feasconst} establishes a way to incorporate the feasibility constraints of the form $T f(x) \leq 0$ where $f(x)$ contains the element-wise products of the hyperbolic and trigonometric functions that appear in \eqref{eq:ySx}. Both thermal power and reactive power generation constraints can be presented in this form; below we will show the corresponding coefficient matrix $T$.
\paragraph{Thermal power limit constraint} by following the sequence from \eqref{eq:ik} to \eqref{eq:gbv}, we can derive an equivalent admittance based expression for the power transfers. As the power transfers ``to'' and ``from'' for each line are very similar, we present only the ``from'' transmission.

The current on line $e = (k,j)$ can be represented as:
\begin{equation} \label{eq:ifrom}
 i_{e}^f = Y^f_{e} v_{\mathrm{to}(e)} + Y^t_{e} v_{\mathrm{from}(e)}. 
\end{equation}
where  $\mathrm{from}(e) = k$ and $\mathrm{to}(e) = j$. Let $y_e^f = \overline{(s_e^f)}/V_k^2$ and $y_e^f - Y_e^t = g_e^f + j b_e^f$. By repeating the admittance base derivation, we ultimately yield the following expression:
\begin{align} \label{eq:gbe}
\begin{bmatrix}
g_\mathcal{E}^f \\
b_\mathcal{E}^f
\end{bmatrix} &= 
\begin{bmatrix}
\mathrm{Re}(Y_\mathcal{E}^+) & \mathrm{Re}(Y_\mathcal{E}^-) & 
-\mathrm{Im}(Y_\mathcal{E}^-) & -\mathrm{Im}(Y_\mathcal{E}^+)\\
\mathrm{Im}(Y_\mathcal{E}^+) & \mathrm{Im}(Y_\mathcal{E}^-) & 
\mathrm{Re}(Y_\mathcal{E}^-)  & \mathrm{Re}(Y_\mathcal{E}^+) 
\end{bmatrix} f(x)
\end{align}
where $Y_\mathcal{E}^\pm = \pm \Delta_{\mathcal{V}\mathcal{E}}\hat{Y}_{\mathcal{V}\mathcal{E}}^f$, and the entry $\Delta_{k,e}$ becomes $1$ only when $k = \mathrm{from}(e)$ and is $0$ otherwise. Then the auxiliary matrix $T$ is easily obtained from \eqref{eq:gbe}.

Concerning the thresholds on the apparent power transfer, we propose to enforce intermediate constraints on the active and reactive power transfers, i.e., $ \ell_{p,e}^-\leq p_e \leq \ell_{p,e}^+$ and $ \ell_{q,e}^-\leq q_e \leq \ell_{q,e}^+$. These intermediate constraints can be easily expressed in the form of $h(x) \leq 0$ which later can be incorporated in the framework \eqref{eq:optimalextension} with the form of the last set of constraints. Then, the limit on the apparent power can be guaranteed by imposing additional conditions $(\ell_{p,e}^\pm)^2 + (\ell_{q,e}^\pm)^2 \leq (s_e^{\max})^2$.

\paragraph{Reactive power generation constraint} Like the load power and active power generation presented in \eqref{eq:ySx}, the reactive generation can be easily represented using an admittance form. The associated matrix $T$ is then given by a single block-row:
\begin{equation}
-\begin{bmatrix}
\mathrm{Im}(Y^+_{\mathcal{G}\mathcal{E}}) & \mathrm{Im}(Y^-_{\mathcal{G}\mathcal{E}}) & 
\mathrm{Re}(Y^-_{\mathcal{G}\mathcal{E}})  & \mathrm{Re}(Y^+_{\mathcal{G}\mathcal{E}}) 
\end{bmatrix}.
\end{equation}
The corresponding conditions are constructed in a manner similar to the thermal limits. We impose conditions on reactive power generation level by introducing intermediate constraints $\limq^- \leq q \leq \limq^+$. Then the reactive limits are enforced simply as $q^{\min} \leq \limq^- \leq \limq^+ \leq q^{\max}$.

It is worth noting that in realistic power systems the generators continue to operate even after reaching the reactive power limits, typically switching from the PV to PQ nodes. While it should be possible to model such a switching behavior by introducing additional nonlinearities in the problem, in this first version of the algorithm we ignore this effect. The best strategies for incorporating the switching effect will be discussed in our future works. In the scope of this work, the reactive power limits are used to define the boundaries of the security region.

\section{Simulations} \label{sec:simulation}
For our algorithm validation and simulations, we relied on transmission test cases included in the MATPOWER package and constructed the inner approximations for all the cases up to $1354$ buses. We also compare the cross-section of our constructed inner approximation to the actual steady-state security sets as defined in this work, to assess the conservativeness of the approach. The problem specifications and the details of the construction are presented below.

\begin{figure}[t]
    \centering
    \includegraphics[width=\columnwidth]{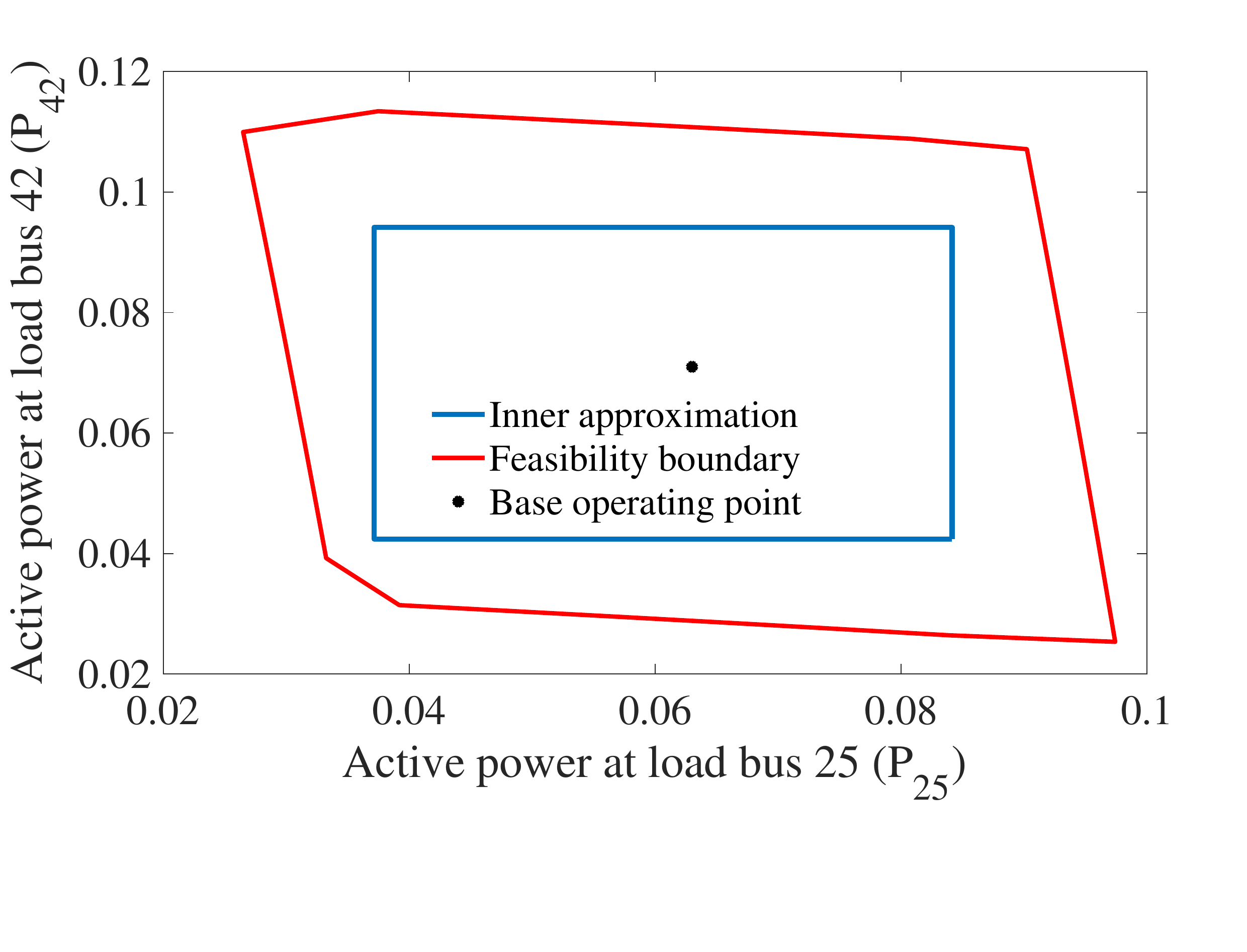}
	\caption{A steady-state security set of IEEE $57$-bus system}
		\label{fig:57}
\end{figure}

\subsection{Problem setup}
A typical constrained power flow problem, which is previously introduced in section \ref{sec:intro}, considers both power flow equality constraints \eqref{eq:pf}, and operational constraints (\ref{eq:v}-\ref{eq:imax}) including nodal voltage and angle limits, power generation limits, and thermal limits. While the voltage regulation standards require a voltage deviation of less than $\pm5\%$ from the nominal voltage level of $1 p.u.$, some base cases available in the MATPOWER package are not compliant with this requirement and have base operating points violating the constraints. In this work, we constrain the voltage to stay within $1\%$ around the base voltage level $x^\star$.
% In addition to the voltage limits, the angle separation is confined within $-\pi/2$ and $\pi/2$ radians. 

For the power generation limits, we adopt the data from the available MATPOWER test cases. The line thermal limits, however, are not always provided; thus for the sake of simplicity, we assume that the maximum power transfer level is double the corresponding value of the base operating condition, i.e., $s_e^{\max} = 2 |s_e^\star|$. The reactive power generation and thermal limits are considered for all test cases with less than $300$ buses. However, we exclude them while simulating $300$- and $1354$-bus systems as the actual feasibility margins become extremely small for the default base operating points.

\begin{figure}[t]
    \centering
    \includegraphics[width=\columnwidth]{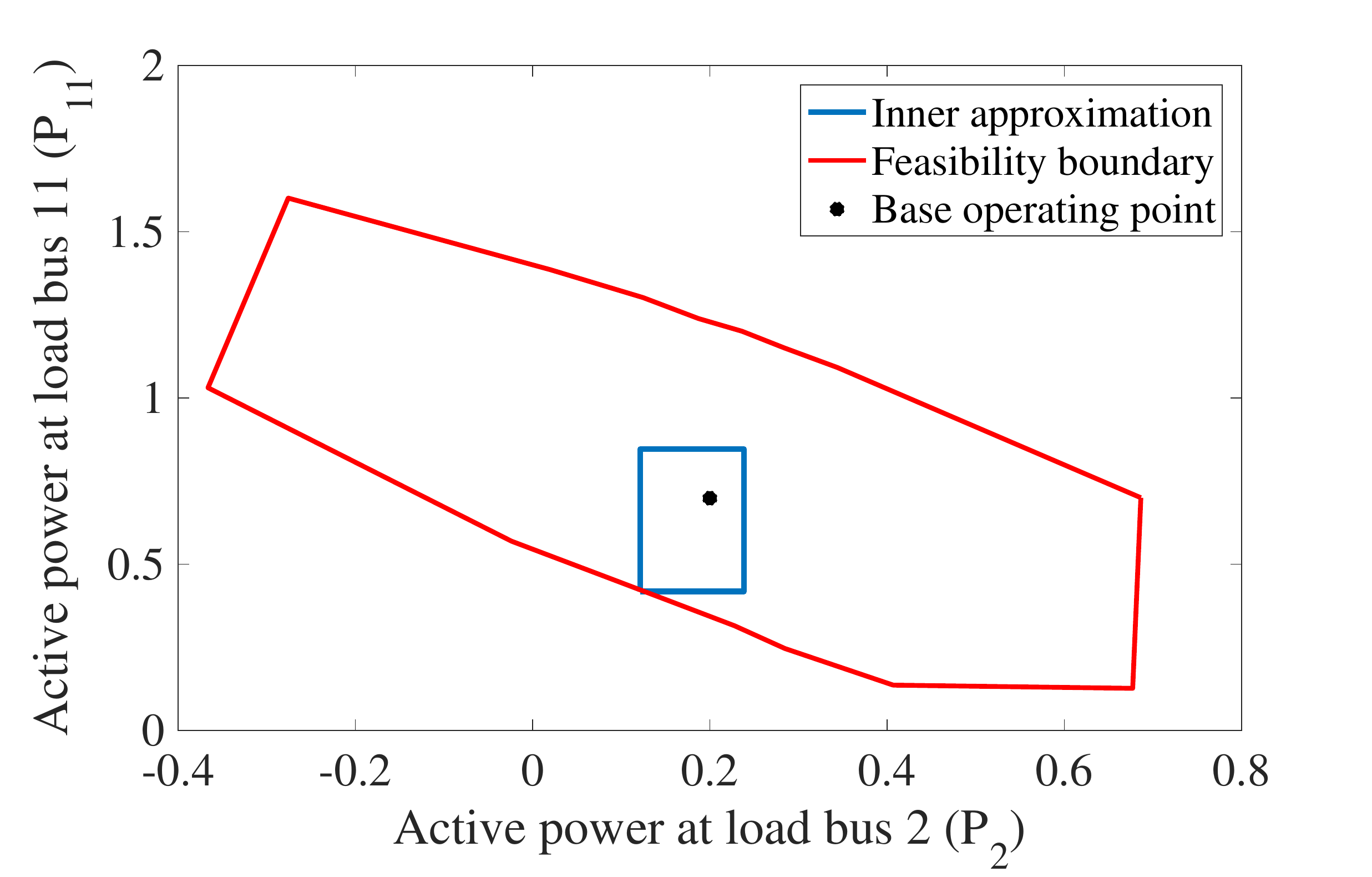}
	\caption{A steady-state security set of IEEE $118$-bus system}
		\label{fig:118}
\end{figure}

\begin{figure}[t]
    \centering
    \includegraphics[width=\columnwidth]{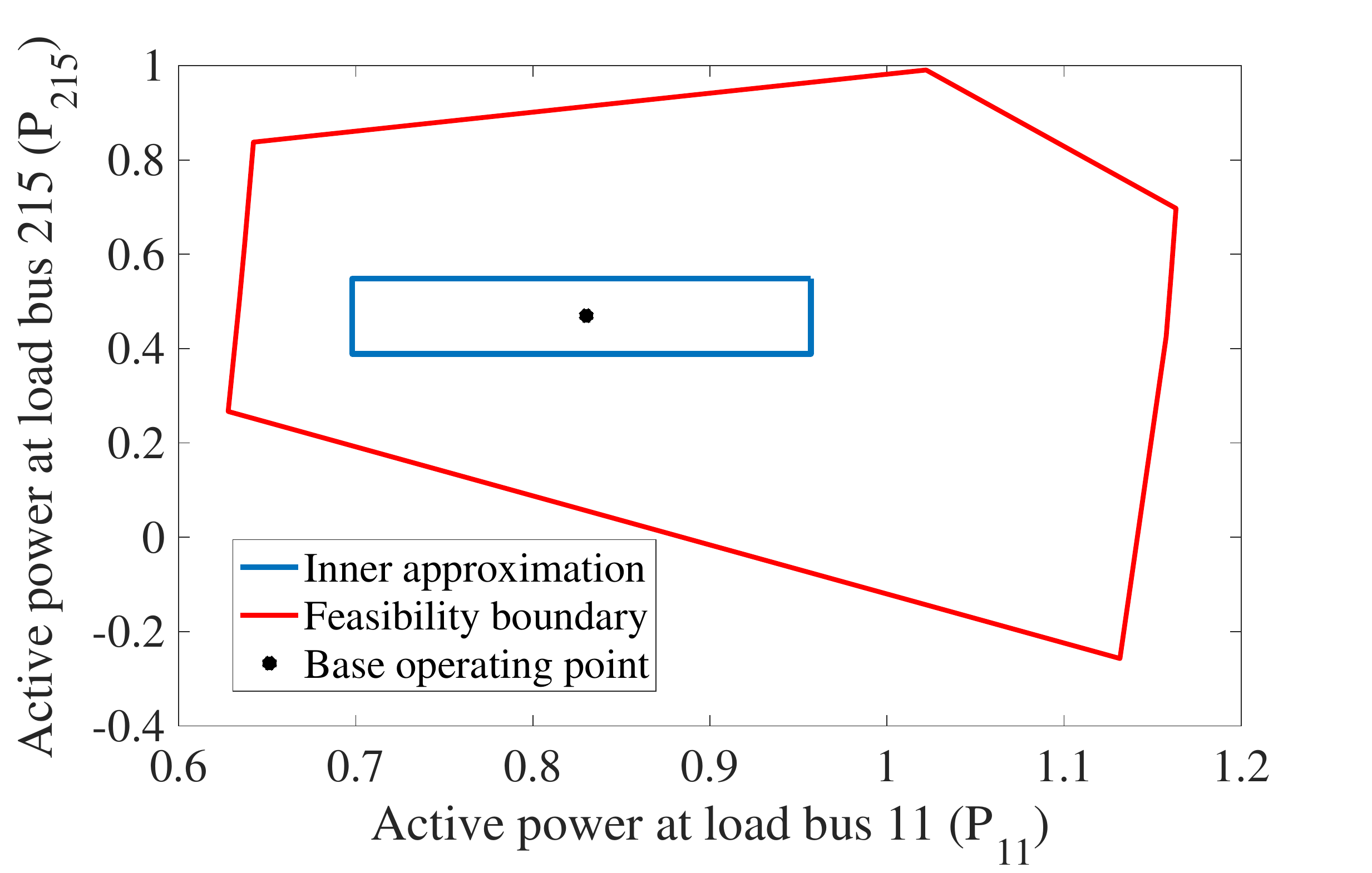}
	\caption{A steady-state security set of IEEE $300$-bus system}
		\label{fig:300}
\end{figure}

\subsection{Steady-state security region construction}
Upon setting up the OPF, we can solve problem \eqref{eq:optimalextension} to identify the ``optimal'' steady-state security sets. In the following simulations, we construct the optimal sets which maximize the uniformly boxed loading variations--the robustness certificate discussed in section \ref{sec:robustness}. For simplicity, we focus on the loading pattern $\udiff$ with only two load injection variations and visualize the estimated steady-state security sets in the corresponding 2D plane as shown in Figure \ref{fig:57} to Figure \ref{fig:pesage}.
Even though with the proposed framework we can estimate feasibility regions in all dimensions, visualizing the entire high-dimensional regions is impractical; thus, we plot only the cross sections on the plane of two nodal power injections. Concerning optimal certificates, we consider uniform bounds on the injection input perturbations by assuming that $\limu^-=\limu^+$; as a result, the estimated feasible sets in the admittance space of $\udiff$ are hypercubes. 

As discussed in section \ref{sec:affinput} and the Appendix,  the optimization of the large-scale test cases is initialized with the solution of the LP relaxation of the original problem. Although the LP optimal solution can be further improved by using nonlinear optimization solvers, for the largest test case considered here, the results by the LP procedure without additional improvement through the nonlinear local search are reported. This is done because of the exceptional performance of the modern LP solvers, which find an estimate in just tens of seconds in comparison to several hours it takes the (off-the-shelf and untuned) IPOPT solver to converge to the local minima. The slow performance of the IPOPT solver is likely related to implementation issues and should be resolved in future extensions of this work.

\begin{figure}[t]
    \centering
    \includegraphics[width=\columnwidth]{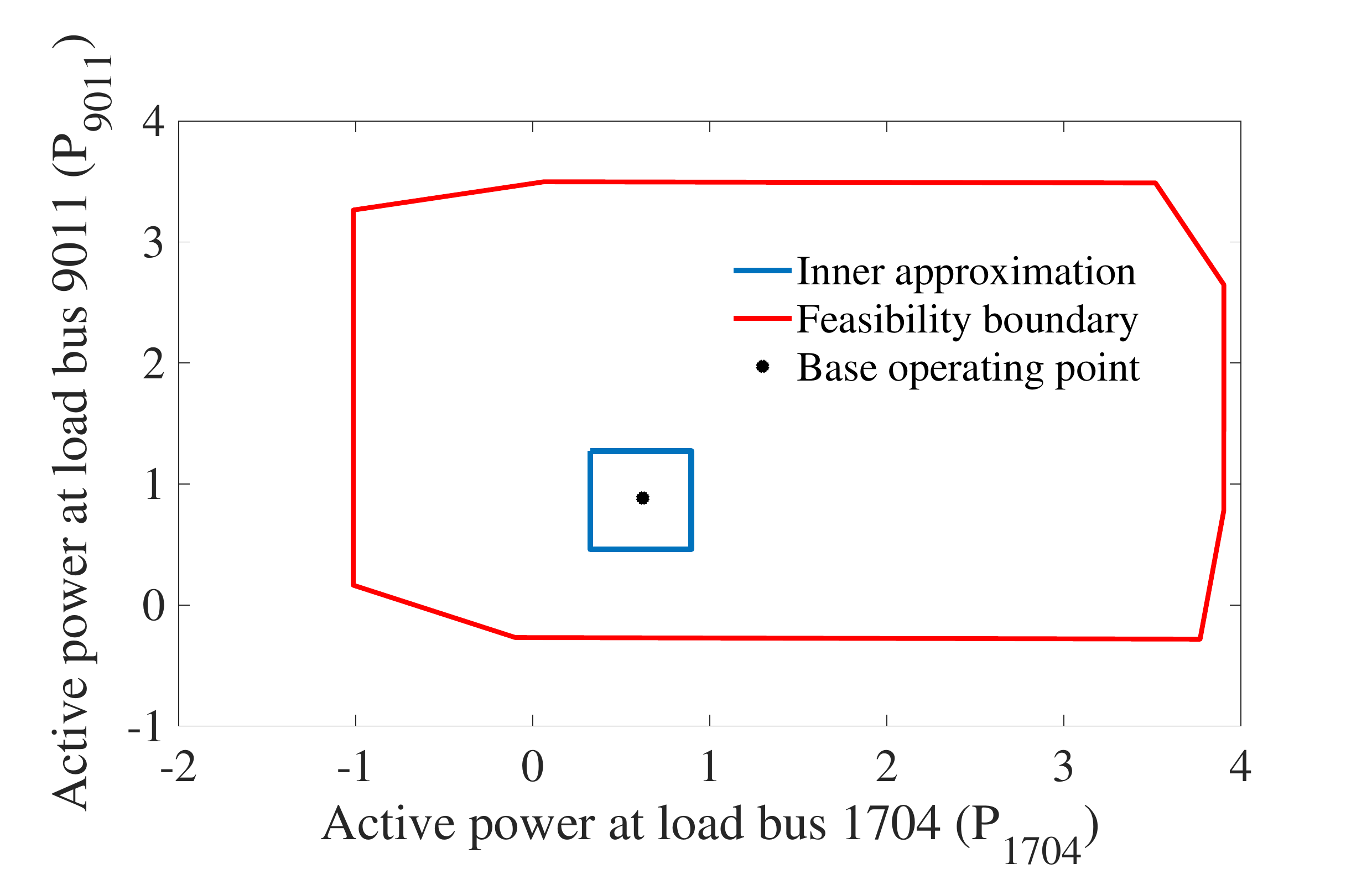}
	\caption{A steady-state security set of $1354$-bus medium part of European system}
		\label{fig:pesage}
\end{figure}

To evaluate the inner approximation technique, we compare the estimations with the actual steady-state security sets. Unlike the inner approximations constructed by our algorithm, which are given explicitly by a set of inequality relations, the actual boundary relies on a point-wise construction scheme. More specifically, we use Continuation Power Flow \cite{ajjarapu1992continuation} to trace the critical points along multiple loading directions and construct the actual boundary by connecting all these points.

\begin{table}[ht]
\centering
  \begin{tabular}{| c || c | c |}
    \hline
    System size  & Covering ratio & Tightness \\ \hline
    9  & 0.06 & 1\\ \hline
    39  & 0.4102 & 1\\ \hline
    57 & 0.53 & 0.833\\ \hline
    118 & 0.083 & 1\\ \hline
    300  & 0.13 & 0.645\\ \hline
    1354  & 0.036 & 0.335\\ \hline
  \end{tabular}
\caption{The performance of the inner approximation}
  \label{table:performance}
\end{table}

In most cases, as shown in Figures \ref{fig:57} to \ref{fig:pesage}, the approximated sets cover a substantially large fraction of the true steady-state security regions. Table \ref{table:performance} shows the covering ratio and the tightness -- the largest ratio between the estimated set and the actual set among all loading directions on the plotting plane-- for more test cases. In some cases like the IEEE $118$-bus test case plotted in Figure \ref{fig:118}, the tightness reaches its maximum value of $1$ when the estimated boundary ``touches'' the true limit along some direction. However, similar to the cause of the estimation conservativeness, the tightness conditions are not fully understood.

As the considered test systems are very different in terms of network properties like topology and line parameters, the results indicate that the proposed technique is stable and effective for a wide variety of transmission systems. For some large-scale cases like the $300$-bus system, the resulting approximations turned out to be conservative. The reasons for the conservativeness may be different; in particular, for $300$-bus case we believe the conservativeness was caused by the abnormally high reactive power compensation. Another common source of the conservativeness are the low impedance lines observed in some of the networks, which cause high sensitivity of the power flows to the angle/voltage variations. These lines can significantly affect the size of the resulting self-mapped polytope in the variable space and reduce the quality of the end certificates. The admittance based representation used in this paper alleviates some part of this problem, but does not eliminate the conservativeness of these cases entirely.

\section{Conclusion}
In this work we, developed a novel framework for constructing inner approximations of the steady-state security sets that does not rely on any special modeling assumptions and approximations and is applicable for general transmission system power flow problems. The framework is based on Brouwer fixed point theorem, which is applied to polytopic regions in variable (voltage-angle) space. In comparison to other studies of the same problem, our framework does not yield simple closed-form expressions for the region and instead relies on nonlinear optimization procedures to find the largest certifiable approximation of the steady-state security region and the corresponding polytope in the variable space with the self-mapping property.

The proposed framework can naturally be applied to a number of common settings, ranging from the classical loadability problem to robustness analysis and chance constraint certification. Such a nonlinear optimization problem is non-convex and likely NP-hard; however, any feasible solution, even not globally optimal one, is a valid certificate. Furthermore, the complexity of the problem scales linearly in the system size; valid certificates can be constructed with computational effort comparable to solving the original OPF problem. Simulation results show that the proposed technique is scalable and can provide sufficiently large estimations.

The quality of the approximation heavily depends on how tightly the nonlinear map \eqref{eq:fixedpointfull} can be bounded, and the results can be improved by using more adaptive bounds. Particularly, to approximate the nonlinear function, we can use more faced polytopes instead of simple hyper-rectangles like the ones plotted in Figure \ref{fig:nonlinearbound}. In general, the choice of matrix $A$ (in \eqref{eq:Alimy}) plays a critical role in the performance of the algorithm. Extending the matrix $A$ with extra rows may help in improving the algorithm, as long as the corresponding inequalities for the variables can be transformed in better bounds on the nonlinearity.  Moreover, as some nonlinear functions appearing in the fixed point map do not contribute significantly to the overall residuals, the corresponding arguments can be bounded in a more loose way allowing for easier certification of the self-mapping. A further line of improvement lies in coming with better initialization strategies and generally improving the performance of local search nonlinear solver.

The proposed framework can be extended to a number of other practically important cases. Of particular interest is, for example, the security constrained OPF. The effect of the $N-1$ contingencies can be naturally incorporated as a low-rank update of the matrix $M$ defining the original problem. Whenever the effect of this update admits simple bounds, the proposed framework can be easily extended to certify security in the presence of $N-1$ or higher order contingencies.
\section{Acknowledgement}
Work of KT was funded by DOE/GMLC 2.0 project: ``Emergency monitoring and controls through new technologies and analytics'', NSF CAREER award 1554171, and 1550015. HN was supported by NTU SUG and Siebel Fellowship. We thank Steven Low for his suggestions.
\bibliographystyle{IEEEtran}
% \bibliography{main}

\appendices
\section{Optimal construction validation} \label{app:validatecons}
To validate the construction of optimal inner approximation \eqref{eq:optimalextension}, which is the main result introduced in section \ref{sec:affinput}, we will show that the imposed constraints guarantee the existence of a feasible solution. In particular, as we show below, such constraints are sufficient conditions for solvability and feasibility.

\subsection{Solvability condition}
The goal of this section is to verify that the condition \eqref{eq:constsol} below, which is also the first set of constraints in \eqref{eq:optimalextension}, is sufficient to ensure the self-mapping required by the Brouwer theorem and thus guarantees the solvability of the power flow equations \eqref{eq:abstract}.
\begin{equation} \label{eq:constsol}
\limy^\pm \geq B^+ \limu^\pm + B^- \limu^\mp +C^+ \delta_2 f^{\pm}(\limy) + C^{-}\delta_2 f^{\mp}(\limy).
\end{equation}
Specifically, we rely on Corollary \ref{cor:selfmaptheo} and later Corollary \ref{cor:ubox} to show that the above constraint verifies self-mapping condition, thus defining a solvability subset $\mathcal{U}(\limu)$. The following Lemma \ref{le:tau} and Theorem \ref{th:main} are necessary to derive the central result of this section, which is presented in Corollary \ref{cor:ubox}.

\begin{lemma}\label{le:tau}
Given the nonlinear system described by the equation \eqref{eq:fixedpoint}, the admissibility and nonlinear bound polytope families $\mathcal{A}(\limy)$ and $\mathcal{N}(\limy)$, if $\xdiff \in \Acal(\limy)$, then the nonlinear correction term $\phi(\xdiff)$ is contained in the polytope $\Acal(\tau(\limy))$, i.e. $\phi(\xdiff) \in \Acal(\tau(\limy))$ with $\tau(\limy) = [-\tau^-(\limy), \tau^+(\limy)] \in \mathbb{R}_+^{k\times 2}$ given by
\begin{equation}\label{eq:tau}
 \tau^\pm(\limy) = C^+ \delta_2 f^{\pm}(\limy) + C^{-}\delta_2 f^{\mp}(\limy)  
\end{equation}
where $ C^+ - C^- = -AJ_\star^{-1}M$ and $C^{\pm}_{ij} \geq 0$.
\begin{proof}
Consider the nonlinear correction term below
\begin{equation}
A\phi(\udiff) = -A J_\star^{-1} M \delta_2f(\xdiff)
              = C \delta_2f(\xdiff)
\end{equation}
Consider the decomposition of the matrix $C$ in $C = C^+ - C^-$ with $C^\pm_{ij} \geq 0$. The component-wise upper bounds of $C \delta_2 f$ can be then represented as 
\begin{equation}
    C \delta_2 f \leq C^+ \max \delta_2 f - C^- \min \delta_2 f.
\end{equation}
Given the bounds $-\delta_2f^-(\limy)- \leq \delta_2f(\xdiff) \leq \delta_2f^+(\limy)$ this transforms into 
\begin{equation}
    C \delta_2 f \leq C^+ \delta_2 f^+ + C^-  \delta_2 f^-.
\end{equation}
Similarly, one derives the lower bound on $C\delta_2 f$. Both of the bounds can be combined in a single expression of the form \eqref{eq:tau}.
%By noting that $C = C^+ - C^-$ and $\delta_2f(\limy)- \leq \delta_2f(\xdiff) \leq \delta_2f(\limy)$ where $\delta_2 f ^\pm$ are non-negative, we apply a matrix $\infty-$norm to bound each entry of $A\phi(\xdiff)$. In particular, we arrive at the following relations
%This bound generalizes the definition of a matrix $\infty-$ norm and follows directly from bounding individual contributions to $A\phi(\xdiff)$ with the help of the non-negativity of  $\delta_2 f ^\pm$.
\end{proof}
\end{lemma}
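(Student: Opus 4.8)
The plan is to reduce the polytope-membership claim $\phi(\xdiff)\in\Acal(\tau(\limy))$ to a pair of componentwise linear inequalities and then discharge them by interval arithmetic through the sign-split of the matrix $C$. By the definition of the admissibility family \eqref{eq:Alimy}, asserting $\phi(\xdiff)\in\Acal(\tau(\limy))$ is exactly the statement $-\tau^-(\limy)\le A\phi(\xdiff)\le \tau^+(\limy)$, so the entire lemma is equivalent to bounding the vector $A\phi(\xdiff)$ above and below. First I would substitute the definition $\phi(\xdiff)=-J_\star^{-1}M\,\delta_2 f(x^\star+\xdiff)$ to write
\[
A\phi(\xdiff) = -AJ_\star^{-1}M\,\delta_2 f(x^\star+\xdiff) = C\,\delta_2 f(x^\star+\xdiff),
\]
with $C=-AJ_\star^{-1}M$, which is exactly the matrix appearing in \eqref{eq:tau}. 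This converts the problem into bounding the image of $\delta_2 f$ under the fixed matrix $C$.

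The key input is the hypothesis $\xdiff\in\Acal(\limy)$ together with the defining property of the bounding family $\mathcal{N}(\limy)$, which yields the componentwise bracket
\[
-\delta_2 f^-(\limy)\le \delta_2 f(x^\star+\xdiff)\le \delta_2 f^+(\limy).
\]
The second step is to split $C=C^+-C^-$ into its nonnegative positive and negative parts and exploit the monotonicity of multiplication by a nonnegative matrix. For the upper bound I would use $C^+\ge 0$ to get $C^+\delta_2 f\le C^+\delta_2 f^+(\limy)$ and $C^-\ge 0$ to get $-C^-\delta_2 f\le C^-\delta_2 f^-(\limy)$; adding these gives $A\phi(\xdiff)\le C^+\delta_2 f^+(\limy)+C^-\delta_2 f^-(\limy)=\tau^+(\limy)$, which is precisely \eqref{eq:tau} with the upper sign.

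The lower bound follows by the mirror computation: writing $-A\phi(\xdiff)=-C^+\delta_2 f+C^-\delta_2 f$, I would bound $-C^+\delta_2 f$ above using the lower end of the bracket (legitimate since $C^+\ge0$) and $C^-\delta_2 f$ above using the upper end (since $C^-\ge0$), obtaining $-A\phi(\xdiff)\le C^+\delta_2 f^-(\limy)+C^-\delta_2 f^+(\limy)=\tau^-(\limy)$, matching \eqref{eq:tau} with the lower sign. Combining the two inequalities yields $\phi(\xdiff)\in\Acal(\tau(\limy))$. I do not expect a genuine obstacle here, since the argument is interval arithmetic for an affine map; the only place demanding care is the sign bookkeeping encoded by the $\mp$ superscripts. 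In particular one must keep in mind that both the upper and the lower bound on $\delta_2 f$ are needed \emph{simultaneously} — because $C$ carries entries of both signs — and that each of $\tau^+$ and $\tau^-$ mixes $\delta_2 f^+$ and $\delta_2 f^-$ through the complementary blocks $C^+$ and $C^-$.
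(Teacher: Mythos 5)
Your proposal is correct and follows essentially the same route as the paper's proof: rewrite $A\phi(\xdiff)=C\,\delta_2 f$ with $C=-AJ_\star^{-1}M$, split $C=C^+-C^-$ into nonnegative parts, and apply the componentwise bracket $-\delta_2 f^-(\limy)\le \delta_2 f\le \delta_2 f^+(\limy)$ from $\mathcal{N}(\limy)$ to obtain both bounds in \eqref{eq:tau}. Your write-up is in fact more explicit than the paper's (which dispatches the lower bound with ``similarly''), but there is no substantive difference in approach.
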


\begin{remark} The map $\tau(\limy): \mathbb{R}_+^{k\times 2}\to\mathbb{R}_+^{k\times 2}$ is monotone in $\limy$ as increasing the region of variable variations defined by $\limy$ only increases the bounds on the nonlinear residual terms. 
\end{remark}

\begin{theorem} \label{th:main}
Given all the definitions above and sufficiently tight bounds on the nonlinearity, there exists a matrix $\limy$ such that the matrix $\tau(\limy)$, which is defined in the Lemma \ref{le:tau}, satisfies  $\tau^\pm (\limy) \leq \limy^\pm$. Then, for any $\udiff$ such that $J_\star^{-1}R\udiff \in \mathcal{A}(\limy - \tau(\limy))$, there exists at least one admissible solution $\xdiff \in \mathcal{A}(\limy)$ of the equation \eqref{eq:fixedpoint}.
\begin{proof}
The ``sufficiently tight'' conditions refers to Remark 1 in the main text and requires the nonlinear bounds to be superlinear in the vicinity of the operating point.

The main statement of the theorem follows from the Brouwer fixed point theorem. Indeed, whenever $\tau^\pm(\limy) \leq \limy^\pm$, the right hand side of \eqref{eq:fixedpoint} is contained in the polytope $\mathcal{A}(\limy)$, therefore the map $F(\xdiff) = J_\star^{-1}R\udiff + \phi(\xdiff)$ maps the compact convex set $\mathcal{A}(\limy)$ onto itself. Thus,  according to Brouwer's theorem, there exists a fixed point $\xdiff = F(\xdiff)$ inside $\mathcal{A}(\limy)$.
\end{proof}

\end{theorem}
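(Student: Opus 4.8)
The plan is to derive the theorem as a direct application of Brouwer's fixed point theorem (through the self-mapping criterion of Corollary~\ref{cor:selfmaptheo}), using Lemma~\ref{le:tau} to control the nonlinear term $\phi$. There are two things to establish: the existence of an admissibility bound $\limy$ with $\tau^\pm(\limy) \leq \limy^\pm$, and the existence of a fixed point of $F(\cdot;\udiff)$ in $\mathcal{A}(\limy)$ for the admissible inputs.

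For the first part, I would appeal to Remark~\ref{rem:1}: each $\delta_2 f_k^\pm(\limy)$ is superlinear at the origin, and since $\tau^\pm(\limy)$ in \eqref{eq:tau} is a nonnegative linear combination of the $\delta_2 f^\pm(\limy)$, it is superlinear as well, so $\|\tau^\pm(\limy)\|/\|\limy\| \to 0$ as $\limy \to 0$. Evaluating on the uniform family $\limy^\pm = \epsilon\,\mathbf{1}$ then gives $\tau_k^\pm(\epsilon\,\mathbf{1}) = o(\epsilon)$ in each component, so for $\epsilon$ small enough $\tau^\pm(\limy) \leq \limy^\pm$ holds componentwise. This is exactly the content of ``sufficiently tight bounds,'' and it also guarantees $\limy^\pm - \tau^\pm(\limy) \geq 0$, so that the shrunk polytope $\mathcal{A}(\limy - \tau(\limy))$ is well defined.

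The heart of the argument is the self-mapping verification. Fix such a $\limy$ and any $\udiff$ with $J_\star^{-1} R\udiff \in \mathcal{A}(\limy - \tau(\limy))$, i.e. $\pm A J_\star^{-1} R\udiff \leq \limy^\pm - \tau^\pm(\limy)$. For an arbitrary $\xdiff \in \mathcal{A}(\limy)$, I would apply $A$ to the fixed-point map, obtaining $A F(\xdiff;\udiff) = A J_\star^{-1} R\udiff + A\phi(\xdiff)$. The first summand is bounded by $\limy^\pm - \tau^\pm(\limy)$ via the hypothesis on $\udiff$; the second is bounded using Lemma~\ref{le:tau}, which---since $\xdiff \in \mathcal{A}(\limy)$---yields $\phi(\xdiff) \in \mathcal{A}(\tau(\limy))$, hence $\pm A\phi(\xdiff) \leq \tau^\pm(\limy)$. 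Adding the two componentwise bounds makes the $\tau^\pm(\limy)$ terms cancel and leaves $\pm A F(\xdiff;\udiff) \leq \limy^\pm$, i.e. $F(\xdiff;\udiff) \in \mathcal{A}(\limy)$. Thus $F(\cdot;\udiff)$ maps $\mathcal{A}(\limy)$ into itself.

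To close, I would verify the hypotheses of Brouwer's theorem: $\mathcal{A}(\limy)$ is convex as an intersection of halfspaces and compact as a closed, bounded polytope, while $F(\cdot;\udiff)$ is continuous because $\phi$ inherits continuity from the differentiable primitive map $f$ via $\delta_2 f$. Brouwer then produces a fixed point $\xdiff = F(\xdiff;\udiff) \in \mathcal{A}(\limy)$, which is the claimed admissible solution of \eqref{eq:fixedpoint}. I expect the self-mapping algebra to be routine once Lemma~\ref{le:tau} is available; the only genuinely delicate points are the passage from the norm-based superlinearity of Remark~\ref{rem:1} to the componentwise inequality $\tau^\pm(\limy) \leq \limy^\pm$ (handled by the uniform scaling $\limy = \epsilon\,\mathbf{1}$), and the compactness of $\mathcal{A}(\limy)$, which relies on the standing assumption that it is a genuine bounded polytope rather than an unbounded polyhedron.
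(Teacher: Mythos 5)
Your proposal follows essentially the same route as the paper's proof: both establish self-mapping of $\mathcal{A}(\limy)$ under $F(\cdot\,;\udiff)$ by combining the input bound $J_\star^{-1}R\udiff \in \mathcal{A}(\limy - \tau(\limy))$ with the nonlinearity bound $\phi(\xdiff) \in \mathcal{A}(\tau(\limy))$ from Lemma~\ref{le:tau}, then invoke Brouwer's theorem, with Remark~\ref{rem:1} supplying the existence of a suitable $\limy$. Your write-up is in fact more complete than the paper's terse argument, since you spell out the componentwise cancellation of $\tau^\pm(\limy)$, the uniform-scaling choice $\limy^\pm = \epsilon\,\One$ behind the existence claim, and the boundedness assumption on $\mathcal{A}(\limy)$ that compactness (and hence Brouwer) silently requires.
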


%{\color{red}Response to question 3, reviewer 2: We have added a Remark clarifying why this assumption holds for the constructions we consider in the paper. I don't see the mentioned Remark.}

\begin{corollary}\label{cor:ubox}
The admissible solution $\xdiff^\star \in \mathcal{A}(\limy)$ is guaranteed to exists for every element $\udiff$ inside a box-shaped uncertainty set $\mathcal{U}(\limu)$ whenever the condition
\begin{equation} \label{eq:selfmap}
    \limy^\pm \geq \sigma^\pm(\limu) + \tau^\pm(\limy)
\end{equation}
\begin{equation}\label{eq:sigma}
\sigma^\pm(\limu) = B^+ \limu^\pm + B^- \limu^\mp \end{equation}
is satisfied with  $B^+ - B^- = A J_\star^{-1} R$ and $B^\pm_{ij} \geq 0$.
\begin{proof}
The logic here is the same as in Lemma \ref{le:tau}. Whenever $\udiff \in \mathcal{U}(\limu)$, one has $-\sigma^-(\udiff) \leq A J_\star^{-1} R \udiff \leq \sigma^+(\udiff)$, hence the inputs satisfy the conditions of Theorem \ref{th:main}.
\end{proof}
\end{corollary}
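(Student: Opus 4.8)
The plan is to deduce the corollary from Theorem~\ref{th:main} by verifying that the box hypothesis on $\udiff$ forces the affine term $A J_\star^{-1} R \udiff$ into the contracted admissibility polytope $\mathcal{A}(\limy - \tau(\limy))$, which is exactly the input condition Theorem~\ref{th:main} requires. The only genuinely new ingredient is a worst-case bound on the linear image of the box $\mathcal{U}(\limu)$, obtained by reusing the sign-splitting device already employed in Lemma~\ref{le:tau}.

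First I would bound $B\udiff := A J_\star^{-1} R \udiff$ over $\udiff \in \mathcal{U}(\limu)$. Decomposing $B = B^+ - B^-$ with $B^\pm_{ij} \geq 0$ and using the componentwise box bounds $-\limu^- \leq \udiff \leq \limu^+$, nonnegativity of $B^\pm$ gives $B^+ \udiff \leq B^+ \limu^+$ and $-B^- \udiff \leq B^- \limu^-$, so $B\udiff \leq B^+\limu^+ + B^-\limu^- = \sigma^+(\limu)$; the symmetric estimate yields $B\udiff \geq -(B^+\limu^- + B^-\limu^+) = -\sigma^-(\limu)$. Hence $\pm B\udiff \leq \sigma^\pm(\limu)$, i.e. $A J_\star^{-1} R\udiff \in \mathcal{A}(\sigma(\limu))$ for every input in the box. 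Next I would exploit the hypothesis \eqref{eq:selfmap}: because $\sigma^\pm(\limu) \geq 0$, the inequality $\limy^\pm \geq \sigma^\pm(\limu) + \tau^\pm(\limy)$ immediately gives $\tau^\pm(\limy) \leq \limy^\pm$---the precondition of Theorem~\ref{th:main}---and also rearranges to $\sigma^\pm(\limu) \leq \limy^\pm - \tau^\pm(\limy)$. Chaining the two componentwise inequalities, $\pm B\udiff \leq \sigma^\pm(\limu) \leq \limy^\pm - \tau^\pm(\limy)$, establishes $A J_\star^{-1} R\udiff \in \mathcal{A}(\limy - \tau(\limy))$.

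With both preconditions of Theorem~\ref{th:main} in place, I would invoke it directly: for each fixed $\udiff \in \mathcal{U}(\limu)$ there is an admissible fixed point $\xdiff \in \mathcal{A}(\limy)$ of \eqref{eq:fixedpoint}, and since the argument is valid for an arbitrary box element the guarantee holds uniformly over $\mathcal{U}(\limu)$. I expect no substantive difficulty, as the proof is essentially the composition of Lemma~\ref{le:tau}'s bounding step with Theorem~\ref{th:main}. The one place demanding care---the closest thing to an obstacle---is the sign bookkeeping in the box bound: one must track that $B^-$ pairs with the \emph{opposite} extreme of the box, which explains the swapped index $\limu^\mp$ in the definition of $\sigma^\pm$, since mismatching these signs would produce an envelope that is either loose or, worse, not a valid bound on $B\udiff$ at all.
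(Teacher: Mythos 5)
Your proposal is correct and is essentially the paper's own proof, written out in more detail: the paper likewise bounds $A J_\star^{-1} R \udiff$ via the sign decomposition $B = B^+ - B^-$ (it says ``the logic here is the same as in Lemma~\ref{le:tau}''), obtains $-\sigma^-(\limu) \leq A J_\star^{-1} R \udiff \leq \sigma^+(\limu)$, and then invokes Theorem~\ref{th:main}, while you additionally make explicit two steps the paper leaves implicit, namely that \eqref{eq:selfmap} together with $\sigma^\pm(\limu)\geq 0$ yields the precondition $\tau^\pm(\limy)\leq\limy^\pm$ of Theorem~\ref{th:main}, and that chaining $\pm A J_\star^{-1}R\udiff \leq \sigma^\pm(\limu) \leq \limy^\pm - \tau^\pm(\limy)$ gives exactly the required input condition. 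One trivial notational fix: the memberships should read $J_\star^{-1}R\udiff \in \mathcal{A}(\sigma(\limu))$ and $J_\star^{-1}R\udiff \in \mathcal{A}(\limy-\tau(\limy))$ without the leading $A$, since the matrix $A$ is already built into the definition \eqref{eq:Alimy} of the polytope.
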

Substituting \eqref{eq:sigma} and \eqref{eq:tau} into the self-mapping condition \eqref{eq:selfmap}, yields the sufficient condition for solvability \eqref{eq:constsol}.

\subsection{Feasibility conditions}
As the self-mapping condition \eqref{eq:selfmap} verifies the existence of a solution, we complete validating the central optimization problem \eqref{eq:optimalextension} by proving the feasibility. In particular, the voltage level and angle separation limits are enforced by constructing a proper matrix $A$ and limiting the corresponding components of $\limy$. These bounds are also naturally used to bound the terms $\delta\rho_e$ and $\delta\theta_e$ which are the arguments of the primitive nonlinear functions.

%presented in \eqref{eq:xminmax} can be easily imposed with the element-wise upper bounds on the fixed point variables, i.e., $x^{\min} \leq x^\star - \limy^- \leq x^\star + \limy^+ \leq x^{\max}$. These constraints can be rewritten in a compact form as $\limy \leq \limy^{\max}$ in \eqref{eq:optimalextension}.

Next, we consider the remaining nonlinear feasibility constraints $h(x) \leq 0$, which is introduced in \eqref{eq:feasconst}. The following Lemma facilitates the incorporation of these feasibility conditions and completes our construction \eqref{eq:optimalextension}.

\begin{lemma} (Feasibility conditions) \label{lem:feasconst}
The nonlinear feasibility constraints $h(x) \leq 0$ will be satisfied if the following condition holds.
\begin{equation} \label{eq:feasconstup}
D^+ \limu^+ + D^- \limu^- + E^+\delta_2 f^+(\limy) + E^-\delta_2 f^-(\limy) + h_\star \leq 0
 \end{equation}
where $D = T L J_\star^{-1}R$, $E = T - T L J_\star^{-1}M$, and $L = \frac{\partial f}{\partial x}|_{x_\star}$.
\end{lemma}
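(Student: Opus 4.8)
The plan is to rewrite the constraint map $h(x)=Tf(x)$ as an affine function of the two quantities we already control --- the input deviation $\udiff$ and the second-order residual $\delta_2 f$ --- and then bound it from above exactly as was done in Lemma \ref{le:tau}. First I would use the definition of $\delta_2 f$, which gives the exact identity $f(x) = f^\star + L\xdiff + \delta_2 f(x)$ with $L = \partial f/\partial x|_{x_\star}$. Multiplying by $T$ and recalling $h_\star = Tf^\star$ yields
\begin{equation*}
h(x) = h_\star + TL\xdiff + T\delta_2 f(x).
\end{equation*}

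Next I would eliminate the state deviation $\xdiff$ using the fixed-point representation \eqref{eq:fixedpointfull}, which states $\xdiff = J_\star^{-1}R\udiff - J_\star^{-1}M\delta_2 f(x)$. Substituting this into the previous display and regrouping the two $\delta_2 f$ contributions gives
\begin{equation*}
h(x) = h_\star + TLJ_\star^{-1}R\,\udiff + \br{T - TLJ_\star^{-1}M}\delta_2 f(x) = h_\star + D\udiff + E\delta_2 f(x),
\end{equation*}
which reproduces precisely the matrices $D = TLJ_\star^{-1}R$ and $E = T - TLJ_\star^{-1}M$ in the statement. At this point $h$ is affine in $\udiff$ and $\delta_2 f$, and the remaining task is a worst-case bound.

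For the bound I would reuse the positive/negative splitting device from Lemma \ref{le:tau}. By hypothesis $\xdiff \in \mathcal{A}(\limy)$, so the defining implication of the bounding family gives $-\delta_2 f^-(\limy) \leq \delta_2 f(x) \leq \delta_2 f^+(\limy)$, while $\udiff \in \mathcal{U}(\limu)$ means $-\limu^- \leq \udiff \leq \limu^+$. Writing $D = D^+ - D^-$ and $E = E^+ - E^-$ with nonnegative entries, the componentwise upper bound $Gv \leq G^+ v^+ + G^- v^-$, valid for any $-v^- \leq v \leq v^+$, applies term by term, so that
\begin{equation*}
h(x) \leq h_\star + D^+\limu^+ + D^-\limu^- + E^+\delta_2 f^+(\limy) + E^-\delta_2 f^-(\limy).
\end{equation*}
Condition \eqref{eq:feasconstup} is exactly the statement that this right-hand side is nonpositive, whence $h(x) \leq 0$ for every admissible pair $(\xdiff,\udiff)$, completing the argument.

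Since each step is either an exact identity or a direct application of the sign-decomposition already established, I do not expect a genuine obstacle; the delicate points are purely in the bookkeeping. I would be most careful about the substitution of $\xdiff$, checking that the contribution from $TL\xdiff$ and the explicit $T\delta_2 f$ term combine with the correct signs to produce $E = T - TLJ_\star^{-1}M$, and about invoking the bound on $\delta_2 f$ \emph{through} the implication $\xdiff\in\mathcal{A}(\limy)\Rightarrow \delta_2 f\in\mathcal{N}(\limy)$ rather than assuming it directly. No separate uniformity or monotonicity hypotheses are needed here, in contrast to the solvability side.
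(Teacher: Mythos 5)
Your proposal is correct and follows essentially the same route as the paper's proof: the exact expansion $h(x) = h_\star + TL\xdiff + T\delta_2 f$, elimination of $\xdiff$ via the fixed-point equation to obtain $h_\star + D\udiff + E\,\delta_2 f$, and the positive/negative matrix splitting bound from Lemma \ref{le:tau}. You even spell out the bounding step more explicitly than the paper does, and correctly note (as the paper does) that the conclusion $h(x)\leq 0$ holds at the fixed point whose existence is certified separately.
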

\begin{proof}
Central to the proof of this result is the following observation. Using the definition of the residual operator $\delta_2 f$, the nonlinear constraints can be expressed as follows.
\begin{align}
h(x) &= h_\star + T L \tilde{x} + T \delta_2 f \nonumber\\
     &= h_\star + T L (J_\star^{-1} R \udiff + \phi(\tilde{x})) + T \delta_2 f \nonumber\\
     &= h_\star + T L J_\star^{-1}R\delta u + (T - T L J_\star^{-1}M) \delta_2 f.
\end{align}
The second equality can be derived with the help of the fixed point equation \eqref{eq:fixedpoint}. Next, using the same bounding approaches used in the previous lemmas, we arrive at the bound \eqref{eq:feasconstup} where the left hand side is an upper bound for $h(x)$.Thus, the condition \eqref{eq:feasconstup} is sufficient for $h(x) \leq 0$ to hold at the fixed point.
\end{proof}

\section{Nonlinear bounding and LP relaxation} \subsection{Nonlinear bounding}\label{app:nlbounds}
The bounds on the individual nonlinear terms of $f(x)$ can be expressed as:
\begin{subequations}
\begin{align}
    \delta^+_{1,2}\{\cos\delta\theta_e\} &= 0 \\
    \delta^-_{1,2}\{\cos\delta\theta_e\} &= \max\{1-\cos\delta\theta^+_e, 1-\cos\delta\theta^-_e\} \\
    \delta^\pm\{\sin\delta\theta_e\} &= \sin\delta\theta^\pm_e \\
    \delta^\pm_{2}\{\sin\delta\theta_e\} &= \delta\theta^\mp_e - \sin\delta\theta^\mp_e \\
    \delta^-_{1,2}\{\cosh\delta\rho_e\} &= 0 \\
    \delta^+_{1,2}\{\cosh\delta\rho_e\} &= \max\{\cosh\delta\rho^+_e, \cosh\delta\rho^-_e \} - 1 \\
    \delta^\pm\{\sinh\delta\rho_e\} &= \sinh\delta\rho^\pm_e \\
    \delta^\pm_{2}\{\sinh\delta\rho_e\} &=  \sinh\delta\rho^\pm_e - \delta\rho^\pm_e
\end{align}
\end{subequations}
These bounds generally follow from the monotonicity/convexity of each of the univariate functions involved. The following corollary and lemma are used to construct the bounds on the multi-variate functions appearing in our power flow representation.%to derive the residual operator $\partial_2 f$ and the bounds on products of nonlinearities.
\begin{corollary} \label{cor:D2}
For the element-wise product of two vector functions $f(x)\odot h(x)$ one has 
\begin{align}\label{eq:d2f}
 \delta_2\{f\odot g\}= \delta f \odot \delta g + \delta_2 f\odot g^\star + 
 f^\star\odot \delta_2 g
\end{align}
\end{corollary}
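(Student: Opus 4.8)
The plan is to prove the identity by direct expansion of the definitions of the residual operators $\delta$ and $\delta_2$ applied to the product $p(x) = f(x) \odot g(x)$, reducing everything to elementary bilinear bookkeeping. First I would assemble the two ingredients that enter $\delta_2 p$: the zeroth-order residual $\delta p = p(x) - p^\star$, and the base Jacobian $\partial p/\partial x|_{x^\star}$ contracted against $\xdiff = x - x^\star$, since by definition $\delta_2 p = \delta p - (\partial p/\partial x|_{x^\star})\,\xdiff$.

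For the residual term, I would substitute $f = f^\star + \delta f$ and $g = g^\star + \delta g$ into the element-wise product and expand,
\begin{align}
f \odot g = f^\star \odot g^\star + \delta f \odot g^\star + f^\star \odot \delta g + \delta f \odot \delta g, \nonumber
\end{align}
so that $\delta p = \delta f \odot g^\star + f^\star \odot \delta g + \delta f \odot \delta g$. The only genuinely nonlinear piece here is the bilinear cross term $\delta f \odot \delta g$, which I expect to survive untouched into the final formula. For the Jacobian, I would apply the product rule component-wise: since $p_i = f_i g_i$, one has $\partial p_i/\partial x_j = g_i\,\partial f_i/\partial x_j + f_i\,\partial g_i/\partial x_j$, and evaluating at the base point gives the contraction $(L_f \xdiff)\odot g^\star + f^\star \odot (L_g \xdiff)$, where $L_f, L_g$ denote the respective base Jacobians.

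Subtracting this linear contraction from $\delta p$ then groups naturally: the $g^\star$-weighted terms combine into $(\delta f - L_f \xdiff)\odot g^\star = \delta_2 f \odot g^\star$, the $f^\star$-weighted terms into $f^\star \odot \delta_2 g$, and the leftover bilinear term $\delta f \odot \delta g$ stands alone, which is exactly the claimed identity. There is no deep obstacle in this argument—it is pure algebra. The single point demanding care is the correct form of the Jacobian of the Hadamard product: I must apply the product rule strictly per-component so that each term retains its proper $g^\star$ or $f^\star$ weighting, because any mishandling there is precisely what would spoil the clean regrouping into $\delta_2 f \odot g^\star + f^\star \odot \delta_2 g$. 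Everything else follows by matching terms.
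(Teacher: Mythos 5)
Your proof is correct and follows the same route as the paper, which simply states that the identity ``follows directly from the definition of the first and second order residuals''; your expansion of $\delta p$, the component-wise product rule for the base Jacobian, and the regrouping into $\delta_2 f \odot g^\star + f^\star \odot \delta_2 g + \delta f \odot \delta g$ is exactly the computation the paper leaves implicit.
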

\begin{proof}
This follows directly from the definition of the first $\delta f, \delta g$ and second $\delta_2 f, \delta_2g$ order residuals.
\end{proof}
\begin{lemma} \label{lem:boundprod}
Bounds on the products can be characterized in the following way:
\begin{align} 
    \delta_2^\pm\{f\odot g\} 
    &= \max\{\delta f^+\odot \delta g^\pm, \delta f^- \odot \delta g^\mp \} \nonumber\\
    &+ f^\star \odot \delta_2 g^\pm +  \delta_2 f^\pm  \odot g^\star
\end{align}
%which is shorthand for two estimations with corresponding superscript $+$ and $-$.
\end{lemma}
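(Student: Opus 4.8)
The plan is to reduce the statement to the exact second-order product identity of Corollary \ref{cor:D2},
\[
\delta_2\{f\odot g\}= \delta f \odot \delta g + \delta_2 f\odot g^\star + f^\star\odot \delta_2 g ,
\]
and then to bound its three summands separately. Because the right-hand side is a sum, a valid componentwise upper (respectively lower) bound on $\delta_2\{f\odot g\}$ is obtained by summing valid componentwise upper (respectively lower) bounds on each term; this is exactly the additive structure of the claimed $\delta_2^\pm\{f\odot g\}$. For all three terms I would use only the univariate enclosures already produced in Appendix \ref{app:nlbounds}, namely $-\delta f^-\le \delta f\le \delta f^+$, $-\delta g^-\le \delta g\le \delta g^+$, and the analogous bounds for $\delta_2 f$ and $\delta_2 g$, all of whose bounding vectors are nonnegative.

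The bilinear term $\delta f\odot\delta g$ I would handle by elementary interval arithmetic. Since $\delta f$ and $\delta g$ range componentwise over intervals with nonnegative endpoints $\delta f^\pm,\delta g^\pm$ but indefinite sign, the extrema of their product over the enclosing box occur at the corners. The largest corner value is attained when the two factors share a sign, giving the elementwise maximum $\max\{\delta f^+\odot\delta g^+,\ \delta f^-\odot\delta g^-\}$; the smallest occurs for opposite signs, giving $-\max\{\delta f^+\odot\delta g^-,\ \delta f^-\odot\delta g^+\}$. These are precisely the first lines of the $+$ and $-$ versions of the bound, with the $\pm/\mp$ pattern encoding exactly this sign bookkeeping. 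Treating $\delta f$ and $\delta g$ as independent over the box yields a valid, if possibly loose, enclosure, consistent with the conservative-bound philosophy of the paper.

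The remaining two terms are linear in the residuals, so their bounds follow immediately once one notes that the base values $f^\star$ and $g^\star$ are nonnegative; for the primitives entering \eqref{eq:ySx} they equal $\cos 0=1$, $\sin 0=0$, $\cosh 0=1$, $\sinh 0=0$, all lying in $\{0,1\}$. Multiplying the enclosure $-\delta_2 f^-\le\delta_2 f\le\delta_2 f^+$ by the nonnegative constant $g^\star$ preserves the inequality directions and yields upper bound $\delta_2 f^+\odot g^\star$ and lower bound $-\delta_2 f^-\odot g^\star$, i.e. the term $\delta_2 f^\pm\odot g^\star$; the term $f^\star\odot\delta_2 g^\pm$ is obtained symmetrically. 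Summing the three contributions then delivers the stated formula.

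The only genuinely nontrivial step is the corner analysis of the bilinear term, where the indefinite signs of $\delta f$ and $\delta g$ force a maximization over four corners that the nonnegativity of the bounding vectors then collapses to the two-argument elementwise maxima in the statement. The one point I would flag explicitly in the write-up is the tacit use of $f^\star,g^\star\ge 0$: the formula multiplies $g^\star$ directly onto $\delta_2 f^\pm$ rather than splitting $g^\star$ into positive and negative parts, and this is legitimate precisely because the relevant base values are nonnegative.
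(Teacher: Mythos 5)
Your proposal is correct and takes essentially the same route as the paper: the paper's own (one-line) proof likewise invokes the exact product identity of Corollary \ref{cor:D2} together with the sign-definiteness of the residual bounds $\delta f^\pm, \delta g^\pm, \delta_2 f^\pm, \delta_2 g^\pm \geq 0$, and your corner analysis of the bilinear term $\delta f \odot \delta g$ simply spells out what that terse argument leaves implicit. Your explicit flag that the formula tacitly requires $f^\star, g^\star \geq 0$ (which holds for the primitives used here, whose base values are $\cos 0$, $\sin 0$, $\cosh 0$, $\sinh 0 \in \{0,1\}$) is a legitimate observation that the paper's proof omits.
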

\begin{proof}
This follows directly from \eqref{eq:d2f} and sign-definiteness of the residual bounds, i.e. $\delta f^\pm, \delta g^\pm, \delta_2 f^\pm, \delta_2 g^\pm \geq 0$.
\end{proof}

\subsection{LP relaxation of the original problem} \label{app:ext}
As discussed in the main text, the Linear Programming relaxation can be used to find simple certificates for initialization of the nonlinear search. Here we derive the linear bounds for the nonlinear function $\phi(\xdiff)$ which will form the LP version of problem \eqref{eq:optimalextension}. These linear bounds can be derived whenever the vector $\limy$ is itself bounded. Assume that the following bounds hold for every polytope:
\begin{subequations}
\begin{align}
   -\delta\theta_e^U \leq  -\delta\theta_e^- \leq \delta \theta_e \leq \delta\theta_e^+ \leq \delta\theta_e^U, \\
 -\delta\rho_e^U \leq  -\delta\rho_e^- \leq \delta\rho_e \leq \delta\rho_e^+ \leq \delta\rho_e^U.
\end{align}
\end{subequations}
Also, define $\delta\theta_e^m = \max\{\delta\theta_e^\pm\}$ and $\delta\rho_e^m = \max\{\delta\rho_e^\pm\}$. Then the following bounds hold for individual nonlinear terms:
\begin{subequations}
\begin{align}
    \delta^+_{1,2}\{\cos\delta\theta_e\} &= 0 \\
    \delta^-_{1,2}\{\cos\delta\theta_e\} &= (1-\cos(\delta\theta_e^U))\frac{\delta\theta_e^m}{\delta\theta_e^U} \\
    \delta^\pm\{\sin\delta\theta_e\} &= \delta\theta^\pm_e \\
    \delta^\pm_{2}\{\sin\delta\theta_e\} &= (\delta\theta_e^U - \sin(\delta\theta_e^U))\frac{\delta\theta^\mp_e}{\delta\theta_e^U} \\
    \delta^-_{1,2}\{\cosh\delta\rho_e\} &= 0 \\
    \delta^+_{1,2}\{\cosh\delta\rho_e\} &= (\cosh(\delta\rho_e^U) - 1)\frac{\delta\rho^m}{\delta\rho_e^U} \\
    \delta^\pm\{\sinh\delta\rho_e\} &= \sinh(\delta\rho_e^U)\frac{\delta\rho^\pm_e}{\delta\rho_e^U} \\
    \delta^\pm_{2}\{\sinh\delta\rho_e\} &= (\sinh(\delta\rho_e^U)-\delta\rho_e^U)\frac{\delta\rho^\pm_e}{\delta\rho_e^U}
\end{align}
\end{subequations}
where we also assumed $ \delta\theta_e^U \leq \pi/2$ and $\delta\rho_e^U \leq 1$. 
% \begin{align}
%     1-\cos(\theta) &\leq (1-\cos(\theta^U))\frac{\theta^m}{\theta^U} \label{eq:1cos}\\
%     \sin(x) &\leq (\theta^U - \sin(\theta^U))\frac{\theta}{\theta^U}\\
%     \cosh(\rho) - 1 &\leq (\cosh(\rho^U) - 1)\frac{\rho}{\rho^U}\\
%     \sinh(\rho) &\leq \sinh(\rho^U)\frac{\rho}{\rho^U} \label{eq:sinh}
% \end{align}

To construct the bounds for the product terms, we rely on the standard McCormick envelopes: 
\begin{subequations}
\begin{align}
xy &\leq x^U y + xy^L - x^U y^L, \\
xy &\leq x y^U + x^Ly - x^L y^U   
\end{align}
\end{subequations}
where the $^{L}$ and $^{U}$ superscripts stand for the upper and lower bounds of the corresponding terms.

\begin{IEEEbiography}[{\includegraphics[width=1in,height=1.25in,clip,keepaspectratio]{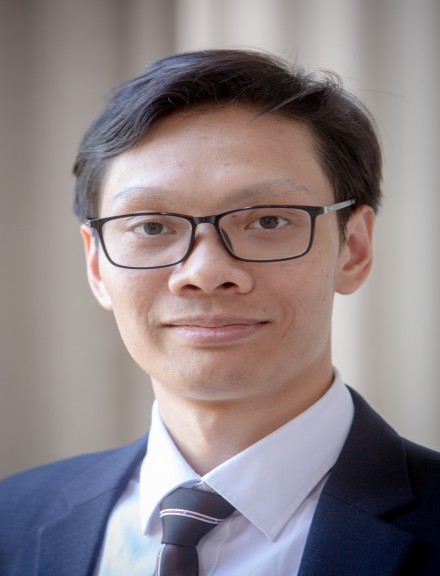}}]{Hung D. Nguyen} (S`12) received the B.E. degree in electrical engineering from HUT, Vietnam, in 2009, the M.S. degree in electrical engineering from Seoul National University, Korea, in 2013, and the Ph.D. degree in Electric Power Engineering at Massachusetts Institute of Technology (MIT). Currently, he is an Assistant Professor in Electrical and Electronic Engineering at NTU, Singapore. His current research interests include power system operation and control; the nonlinearity, dynamics and stability of large scale power systems; DSA/EMS and smart grids.
\end{IEEEbiography}
%  \vspace{-100 mm}
%\newpage
\begin{IEEEbiography}[{\includegraphics[width=1in,height=1.25in,clip,keepaspectratio]{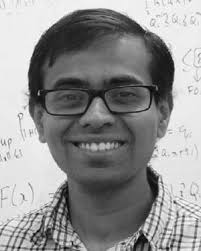}}]{Krishnamurthy Dvijotham} received the bachelor’s degree from the Indian Institute of Technology Bombay, Mumbai, India, in 2008, and the Ph.D. degree in computer science and engineering from the University of Washington, Seattle, Washington, USA, in 2014. He was a Researcher with the Optimization and Control Group, Pacific Northwest National Laboratory and a Research Assistant Professor with Washington State University, Pullman, WA, USA. He currently is a research scientist in Google Deepmind. His research interests are in developing efficient algorithms for automated decision making in complex physical, cyber-physical and social systems. 
\end{IEEEbiography}
%  \vspace{-100 mm}
\begin{IEEEbiography}[{\includegraphics[width=1in,height=1.25in,clip,keepaspectratio]{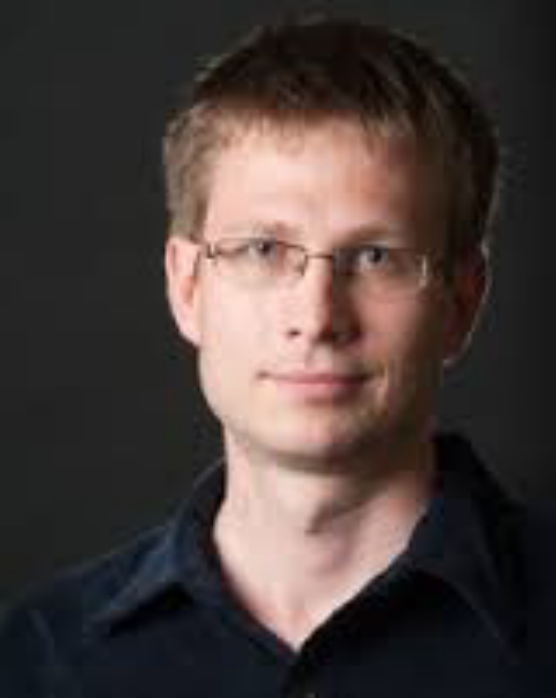}}]{Konstantin Turitsyn} (M`09) received the M.Sc. degree in physics from Moscow Institute of Physics and Technology and the Ph.D. degree in physics from Landau Institute for Theoretical Physics, Moscow, in 2007.  Currently, he is an Associate Professor in the Mechanical Engineering Department of Massachusetts Institute of Technology (MIT), Cambridge. Before joining MIT, he held the position of Oppenheimer fellow at Los Alamos National Laboratory, and Kadanoff–Rice Postdoctoral Scholar at University of Chicago. His research interests encompass a broad range of problems involving nonlinear and stochastic dynamics of complex systems. Specific interests in energy related fields include stability and security assessment, integration of distributed and renewable generation.
\end{IEEEbiography}

\end{document}